\providecommand{\U}[1]{\protect\rule{.1in}{.1in}}
\def\@listi{\leftmargin\leftmargini
\parsep 4.5\p@ \@plus2\p@ \@minus\p@
\topsep 9\p@   \@plus3\p@ \@minus5\p@
\itemsep 0.0cm \@plus2\p@ \@minus\p@}
\let\@listI\@listi
\newtheorem{theorem}{Theorem}
\newtheorem{notation}{Notation}
\newtheorem{definition}[notation]{Definition}
\newtheorem{lemma}[notation]{Lemma}
\newtheorem{corollary}[notation]{Corollary}
\newtheorem{example}[notation]{Example}
\newtheorem{remark}[notation]{Remark}
\begin{document}

\title{Maximum gap in cyclotomic polynomials}
\author{
Ala'a Al-Kateeb\footnote{Department of Mathematics, Yarmouk university, Irbid, Jordan\;\;\; (alaa.kateeb@yu.edu.jo)}\;\;\;\; Mary Ambrosino\footnote{Department of Mathematics, North Carolina State University, Raleigh, USA\;\;\;(mary.e.ambrosino@gmail.com)}\;\;\;\; Hoon Hong \footnote{Department of Mathematics, North Carolina State University, Raleigh, USA\;\;\; (hong@ncsu.edu)}\;\;\;\; Eunjeong Lee\footnote{Institute of Mathematical Sciences, Ewha Womans University, Seoul, Republic of Korea\;\;\;(ejlee127@gmail.com)}}
\date{\today}
\maketitle

\begin{abstract}
Cyclotomic polynomials play fundamental roles in number theory, combinatorics,
algebra and their applications. Hence their properties have been extensively investigated. 
In this paper, we study the maximum gap $g$ (maximum of
the differences between any two consecutive exponents). In 2012, it was shown
that $g\left(  \Phi_{p_{1}p_{2}}\right)  =p_{1} -1$ for primes $p_{2}>p_{1}$.
In 2017, based on numerous calculations, the following generalization was
conjectured: $g\left(  \Phi_{mp}\right)  =\varphi(m)$ for square free odd $m$
and prime $p>m$. The main contribution of this paper is a proof of this conjecture.

\end{abstract}

\quad\textbf{Key Words}: Cyclotomic polynomials, inverse cyclotomic polynomials, maximum gap.

\quad\textbf{2010 Mathematics Subject Classification}.  
11B83, 11C08.

\section{Introduction}

The $n$-th cyclotomic polynomial $\Phi_{n}$ is defined as the monic polynomial
in $\mathbb{Z}[x]$ whose complex roots are the primitive $n$-th~roots of
unity. The cyclotomic polynomials play fundamental roles in number theory,
algebra, combinatorics and their applications, which motivated the extensive
investigation on its structure, for instance `height', `jump', and `gap'.

The investigation on height (maximum absolute value of coefficients) was
initiated by the finding that the height can be bigger than 1 (as exhibited by
$n=105$). 
It has produced numerous results, to list a few: 
upper bound~\cite{BL,BE3,BG1,BZ1,VA,VA3,VA2,BAT2,VA4,BZ2,LE},
realizability~\cite{VA5,KO,Fi,FO,GA-Mo,GA-MO2,Mor2003,Mor2012,Ji}, and
flatness~\cite{BAC,ED,KA1,KA2,ZH17}. 
The investigation on jumps (a jump happens when two
consecutive coefficients  are different) was initiated 
by Bzd\c{e}ga \cite{BZ4}
which was motivated by the work of Gallot and Moree \cite{GA-MO2}. A sharp bound of the
number of jumps for ternary cyclotomic polynomials is provided in
\cite{CCLMS2016}. 

The investigation on maximum gap (maximum difference between the consecutive exponents, see Definition~\ref{def:maxgap}) was
motivated by its need for analyzing the complexity~\cite{HLLP11a} of a certain
cryptographic pairing operation over elliptic curves~\cite{FST,LLP,ZZH}. Later
it became a problem on its own because it could be viewed as a first step
toward understanding of sparsity structure of cyclotomic polynomials. In 2012
\cite{HLLP}, it was shown that the maximum gap for binary cyclotomic
polynomial $\Phi_{p_{1}p_{2}}$ is $p_{1}-1$, that is, $g(\Phi_{p_{1}p_{2}})=
p_{1}-1$. In 2014, Moree~\cite{Moree2014} revisited the result and provided an
inspiring conceptual proof by making a connection to numerical semigroups of embedding dimension two. In 2016, Zhang~\cite{Zhang16} gave a simpler proof, along with
the result on the number of occurrences of the maximum gaps. Concurrently,
Camburu, et al. ~\cite{CCLMS2016} communicated an even simpler proof by Kaplan.
Thus we feel that the investigation of maximum gap of binary cyclotomic
polynomials is practically completed. Hence, the next natural challenge is to
study the maximum gap of ternary cyclotomic polynomials, and ultimately,
arbitrary cyclotomic polynomials.

In this paper, we tackle the maximum gap in arbitrary case.
The following
three graphs visualize the relation between $n$ and $g(\Phi_{n})$, where the
horizontal axis stands for~$n$ and the vertical axis for $g(\Phi_{n}).$

\begin{center}
{\small {
\begin{tabular}
[c]{ccc}%
(A) arbitrary $n$ & (B) odd square-free $n$ & (C) $n=mp$ for $m=105, ~p > 7 $
prime\\
\includegraphics[width=2.0in]{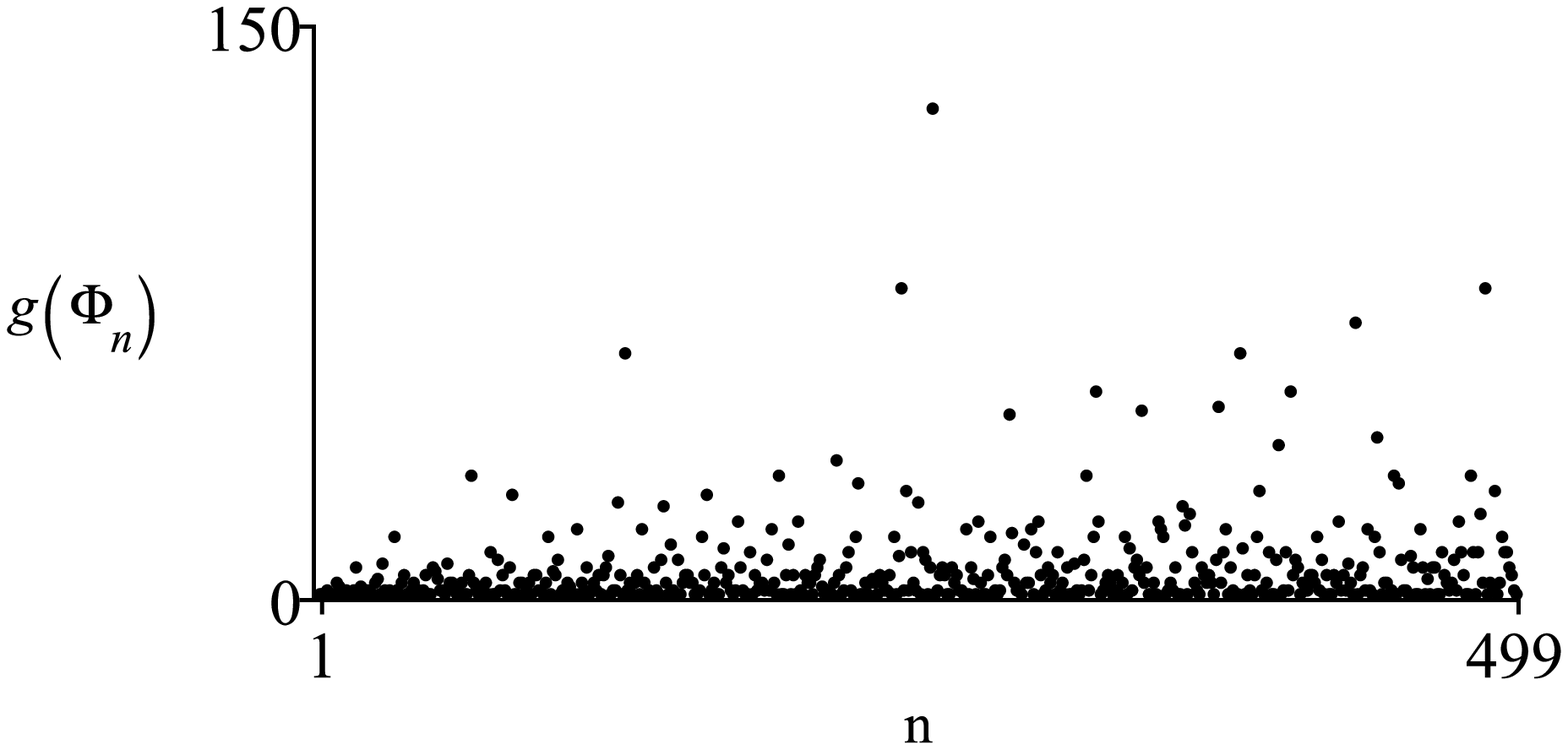} &
\includegraphics[width=2.0in]{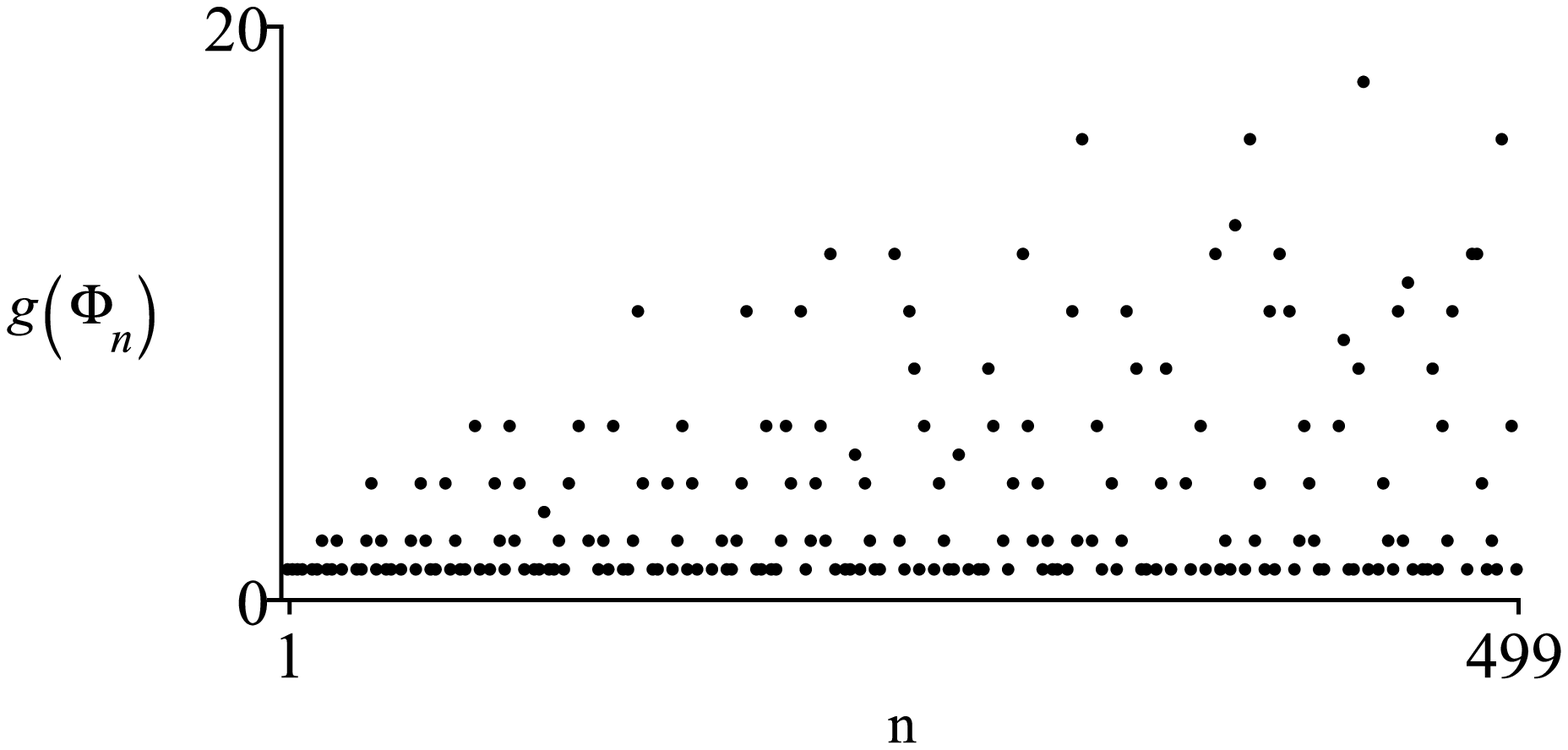} &
\includegraphics[width=2.0in]{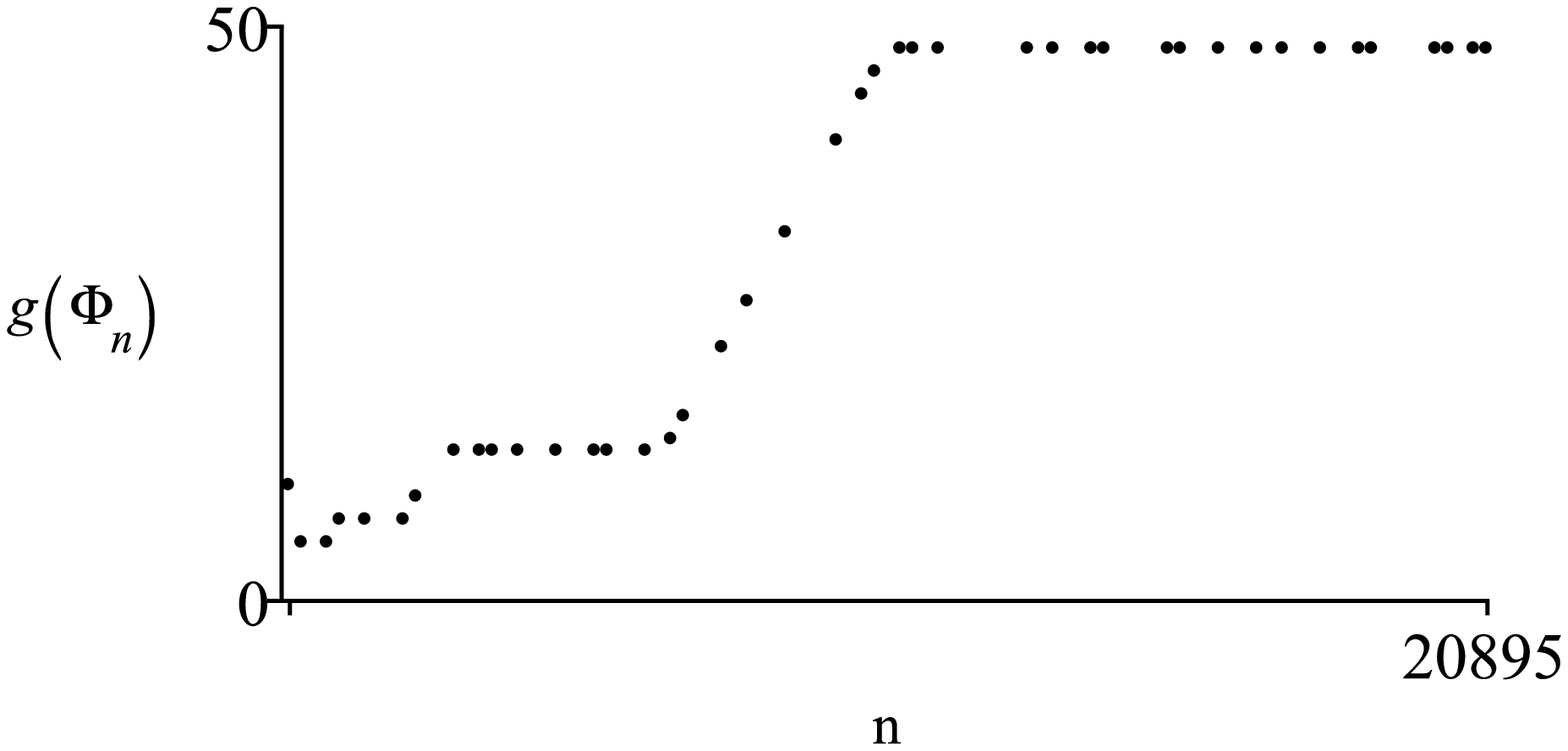}
\end{tabular}
} }
\end{center}

Graph A shows $g(\Phi_{n})$ for arbitrary $n$ up to $499$. It is not easy to
discern any pattern on the relation between $n$ and $g(\Phi_{n})$, mainly
because there are several very large values of $g(\Phi_{n})$, pushing down
other values toward~$0$. Close inspection shows that those $n$ are usually
non-square-free. In fact it can be easily explained by the basic property of
cyclotomic polynomial
\[
\Phi_{n}\left(  x\right)  =\Phi_{\mathrm{radical}(n)}\left(  x^{\frac
{n}{\mathrm{radical}(n)}}\right)  \ \ \ \ \ \ \text{which implies  \ \ \ \ }%
g(\Phi_{n})=\frac{n}{\mathrm{radical}(n)}\ g(\Phi_{\mathrm{radical}(n)})
\]
The factor $\frac{n}{\mathrm{radical}(n)}$ contributes to the largeness. Thus
we will restrict the study to square-free $n$. But then we recall another
basic property
\[
\Phi_{2n}\left(  x\right)  =\pm\Phi_{n}\left(  -x\right)  \ \ \ \ \ \ \text{in
turn \ \ \ \ \ }g(\Phi_{2n})=g(\Phi_{n}).
\]
Hence we will further restrict the study to odd square-free $n$.

Graph B shows $g(\Phi_{n})$ for only odd square-free $n$ up to $499$. Now we
see some patterns: there are several horizontal lines, that is, several values
of $n$ have same values for $g(\Phi_{n})$. Close inspection shows that many of
them have same prime factors, except the biggest prime factor. This motivates
us to restrict $n$ to be $mp$ for a fixed odd square-free $m$ and arbitrary
prime $p$ which is bigger than all prime factors of $m$.

Graph C shows $g(\Phi_{n})$ for $n$ where $n=mp$ for $m=105$ and arbitrary
prime $p$ which is bigger than all prime factors of $m$. Now we finally see
some striking patterns. The graph appears to be a piece-wise linear curve with
several flat line segments.  In particular, there is a flat line apparently
forever to the right. Close observation shows that the flat line starts from
the smallest prime $p>m$ and the value is $\varphi(m)$. In
2017~\cite{APhD,AHL2017-lower}, through numerous computational confirmation,
it was conjectured that the phenomena always hold. In fact, it is consistent
with the previous result in \cite{HLLP},
 that is, for any two primes $p_1<p_2$, we have  $g(\Phi_{p_{1}p_{2}}%
)=p_{1}-1=\varphi(m)$, when we let $m=p_{1}$ and $p=p_{2}$. 
The main contribution of this paper is to prove this conjecture.

\begin{theorem}
[Main]\label{main} If $m$ is a square-free odd integer and $p>m$ is  a prime, then \[ g(\Phi_{mp})=\varphi(m)\]
\end{theorem}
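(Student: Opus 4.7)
The plan is to split the theorem into the two inequalities $g(\Phi_{mp}) \geq \varphi(m)$ and $g(\Phi_{mp}) \leq \varphi(m)$, using as the principal algebraic input the identity
\[
\Phi_m(x)\,\Phi_{mp}(x) \;=\; \Phi_m(x^p),
\]
which holds because $p \nmid m$. Throughout I would also use the inverse cyclotomic polynomial $\Psi_m(x) := (x^m - 1)/\Phi_m(x)$, which is monic of degree $m - \varphi(m)$ and has constant term $\Psi_m(0) = \Phi_1(0) = -1$ (every other $\Phi_d(0)$ with $d \mid m$, $d > 1$, equals $+1$).

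For the lower bound, I would reduce the identity modulo $x^p$. Since $\Phi_m(x^p) \equiv 1 \pmod{x^p}$, this gives $\Phi_{mp}(x) \equiv \Phi_m(x)^{-1} \pmod{x^p}$ in $\mathbb{Z}[[x]]$, and multiplying top and bottom by $\Psi_m$ yields the explicit power series
\[
\Phi_m(x)^{-1} \;=\; \frac{-\Psi_m(x)}{1 - x^m} \;=\; -\Psi_m(x)\bigl(1 + x^m + x^{2m} + \cdots\bigr).
\]
Each shifted copy of $-\Psi_m$ occupies the interval $[km,\,km + m - \varphi(m)]$, and consecutive copies are separated by a run of exactly $\varphi(m) - 1$ zero coefficients. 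Since $p > m$, the first such run---from the leading term of $-\Psi_m$ at exponent $m - \varphi(m)$ with coefficient $-1$, to the constant term of the next copy at exponent $m$ with coefficient $+1$---lies inside the range $[0,p-1]$ where the coefficients of $\Phi_{mp}$ coincide with those of the power series. This exhibits a gap of exactly $\varphi(m)$, proving $g(\Phi_{mp}) \geq \varphi(m)$.

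For the upper bound, I would multiply both sides of the key identity by $\Psi_m$ to get $(x^m - 1)\,\Phi_{mp}(x) = \Phi_m(x^p)\,\Psi_m(x)$, and extract the coefficient recurrence
\[
c_n \;=\; c_{n-m} - R_n, \qquad R_n := [\Phi_m(x^p)\,\Psi_m(x)]_n,
\]
for $c_n := [\Phi_{mp}]_n$ (with $c_n = 0$ for $n < 0$). Telescoping gives $c_n = -\sum_{k \geq 0} R_{n - km}$. The support of $R$ is sparse: $R_n \neq 0$ forces $n = pi + j$ with $i \in \mathrm{supp}(\Phi_m)$ and $j \in \mathrm{supp}(\Psi_m)$. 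Combining this sparsity with the palindromic symmetry $c_n = c_{\varphi(mp) - n}$ of $\Phi_{mp}$, the plan is to show that every window of $\varphi(m) + 1$ consecutive exponents in $[0, \varphi(mp)]$ meets $\mathrm{supp}(\Phi_{mp})$.

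The main obstacle is the middle range of exponents, where many terms $R_{n - km}$ enter each $c_n$ simultaneously and could in principle cancel, producing a forbidden run of zeros longer than $\varphi(m) - 1$. Ruling this out requires delicate sign and support bookkeeping. A promising route is induction on the number of prime factors of $m$ via the nested identity $\Phi_{mp}(x) = \Phi_{(m/q)p}(x^q)/\Phi_{(m/q)p}(x)$ for any prime $q \mid m$; the inductive step would reduce to showing that the substitution-and-division operation $f(x) \mapsto f(x^q)/f(x)$ multiplies the maximum gap by exactly $q - 1$ when applied to $f = \Phi_{(m/q)p}$, whose block structure is known by the inductive hypothesis. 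Establishing this multiplicative law, with the base case supplied by the binary result $g(\Phi_{p_1 p_2}) = p_1 - 1$ of \cite{HLLP}, is what I expect to be the technical heart of the argument.
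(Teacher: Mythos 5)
Your lower bound is correct, and it is essentially the paper's own treatment of the leftmost $p$-block: reducing $\Phi_m(x)\Phi_{mp}(x)=\Phi_m(x^p)$ modulo $x^p$ shows that the low-order part of $\Phi_{mp}$ equals $-\Psi_m(x)(1+x^m+x^{2m}+\cdots)$, and the jump from the leading term of the $0$-th copy of $-\Psi_m$ (exponent $m-\varphi(m)$) to the constant term of the next copy (exponent $m<p$) gives a gap of exactly $\varphi(m)$. The upper bound, however, is not proved. You correctly identify the obstacle (possible cancellation among the many terms $R_{n-km}$ in the middle range of exponents) and then defer it to an unproven ``multiplicative law'': that $f\mapsto f(x^q)/f(x)$ multiplies the maximum gap by $q-1$ when $f=\Phi_{(m/q)p}$. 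That law is false as stated. Take $m=15$, $p=7$, $q=3$, so $f=\Phi_{35}$ and $g(\Phi_{35})=4$ by the binary result; the law would give $g(\Phi_{105})=2\cdot 4=8$, whereas in fact $g(\Phi_{105})=3$ (its support near the bottom is $0,1,2,5,6,7,8,9,12,\dots$, with no gap exceeding $3$). The structural reason the induction cannot close is that your inductive step only ever invokes the hypothesis $p>m/q$, while the theorem genuinely requires $p>m$ --- this is exactly where the flat part of the paper's Graph C begins --- and nothing in your scheme injects the stronger hypothesis at the point where the law is applied.

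The paper's upper bound rests on a different structural fact. Writing $\Phi_{mp}=\sum_i f_{m,p,i}\,x^{ip}$ with $\deg f_{m,p,i}<p$ and slicing each $p$-block into length-$m$ pieces, every representative $m$-block satisfies $f_{m,p,i,0}=\Theta_i\Psi_m$ with $\Theta_i=\operatorname{rem}\bigl(w_i(x^{m-r}),\Phi_m\bigr)$ (Theorem~\ref{explicit}). Divisibility by $\Psi_m$ bounds the gap \emph{within} each block (if $h=v\Psi_m$ with $\deg h<m$ had a gap exceeding $\varphi(m)$, then $h\Phi_m=vx^m-v$ would be forced to have a nonzero coefficient at an exponent where it must vanish), and it forces $\deg f_{m,p,i,0}-\operatorname{tdeg}f_{m,p,i,0}\geq\psi(m)$, which bounds the gaps \emph{between} consecutive blocks. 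If you want to salvage your plan, the quantity to control is not a gap-multiplication law under $f\mapsto f(x^q)/f(x)$ but precisely this divisibility of the blocks by $\Psi_m$; note that the hypothesis $p>m$ enters there through $q=\operatorname{quo}(p,m)\geq 1$, i.e., through each $p$-block being long enough to contain at least one full $m$-block.
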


For proving this result, we began by reviewing various proof techniques used
for studying cyclotomic polynomials. We noticed that a fruitful technique has
been dividing the $\Phi_{mp}$ into several blocks (Notation \ref{blocks}) and
investigate and exploit their \emph{relations}. Several nice relations (such
as repetition and invariance) were observed by~\cite{AM2,KA1,KA2}.
See~\cite{AK, AHL17} for a list of those relations. Those relations have been
successively used for computing cyclotomic polynomials efficiently by Arnold
and Monagan \cite{AM2} and for studying flatness by Kaplan~\cite{KA1,KA2}.

Encouraged by the effectiveness of the relations, we also applied them to the
study of maximum gap. The relations allowed us to focus on a few
representative blocks instead of all the blocks of the cyclotomic polynomial.
However, the relations were not enough for proving the conjecture, mainly due
to the lack of understanding of the representative blocks. In~\cite{AK,
AHL17}, an explicit expression for the blocks was derived. The explicit
expression for the blocks allowed us to prove the conjecture for a certain
family of cyclotomic polynomials. However, the explicit expression was not
enough for proving the full conjecture, mainly because it is a sum of several
polynomials. The summation could cause introduction and cancellation of terms,
resulting in shrinking, enlarging, splitting of gaps, which is very difficult
to analyze.

Then, we hit on the \emph{key} structural observation: \emph{All
representative blocks are divisible by the $m$-th inverse cyclotomic
polynomial} (for the definition of inverse cyclotomic polynomial, see~\cite{Mor09} and Definition~\ref{def:icyc}). 
We captured the observation by deriving another expression for
blocks, that makes the divisibility explicit (Theorem~\ref{explicit}). Using
the new explicit expression for the blocks along with the known relations
among blocks, we were able to show that the maximum gaps within and between
blocks are all less than or equal to $\varphi(m)$, finally proving the
conjecture. We hope that the structural result (Theorem~\ref{explicit}) could
be useful for investigating other properties of cyclotomic polynomials.

The paper is structured as follows: In section \ref{sec:blocks} we review a
usual division of $\Phi_{mp}$ into several blocks (Notation~\ref{blocks}) and
list several known or easily provable relations among blocks
(Lemma~\ref{relation}). Then we prove the key structural result on blocks
(Theorem~\ref{explicit}). In section \ref{sec:gaps}, using the key structural
result for blocks along with the relations among blocks, we show that the
maximum gaps within and between blocks are all less than or equal to
$\varphi(m)$, finally proving the conjecture.

\section{Blocks}

\label{sec:blocks} In this section we review a usual division of $\Phi_{mp}$
into several blocks. Then we study each block and their relations.
Lemma~\ref{relation} list some known or new relations among blocks, explicitly
and concisely. Theorem~\ref{explicit} contains the key finding on the
structure of each block. It provides an explicit expression, which shows that
almost all blocks are divisible by an inverse cyclotomic polynomial.

\begin{notation}
[Division and blocks]\label{blocks}We divide $\Phi_{mp}$ as follows:%
\begin{align*}
\Phi_{mp}  &  =\sum_{i=0}^{\varphi\left(  m\right)  -1}f_{m,p,i}\ x^{ip} &  &
\text{where }\deg f_{m,p,i}<p\\
f_{m,p,i}  &  =\ \ \sum_{j=0}^{q}\ \ f_{m,p,i,j}\ x^{jm} &  &  \text{where
}\deg f_{m,p,i,j}<m
\end{align*}
where $q=\operatorname*{quo}\left(  p,m\right)  $ and $r=\operatorname*{rem}%
\left(  p,m\right)  $. Observing the sizes of each blocks, we call

\begin{enumerate}
\item $f_{m,p,i}$ a $p$-\emph{block}

\item $f_{m,p,i,j}$ an $m$-\emph{block} for $j<q$

\item $f_{m,p,i,q}$ an $r$-\emph{block}
\end{enumerate}
\end{notation}

\begin{example}
\label{example_block} Let $m=15$ and $p=53$. Then $\varphi\left(  m\right)
=8,\ q=3$ and $r=8$. The following diagram illustrates the division of
$\Phi_{mp}$ into $p$-blocks and then the division of each $p$-block into three
$m$-blocks and one $r$-block.
\[
\noindent \scalebox{0.75}{
\begin{pspicture}(2,0.75)(19,4.8)

\definecolor{MyColor}{rgb}{0.95,0.95,0.95} 

\psframe[linestyle=none,dimen=outer,fillstyle=solid,fillcolor=MyColor](0.00,3.00)(21.80,3.50)
\pcline[linewidth=0.015,linestyle=dashed]{<->}(0.00,4.50)(10.60,4.50)\ncput*{$p$}
\psline[linewidth=0.015](0.00,0.75)(0.00,4.80)
\pcline[linewidth=0.015,linestyle=dashed]{<->}(0.00,4.00)(3.00,4.00)\ncput*{$m$}
\pcline[linewidth=0.015,linestyle=dashed]{<->}(3.00,4.00)(6.00,4.00)\ncput*{$m$}
\psline[linewidth=0.015](3.00,1.75)(3.00,4.30)
\pcline[linewidth=0.015,linestyle=dashed]{<->}(6.00,4.00)(9.00,4.00)\ncput*{$m$}
\psline[linewidth=0.015](6.00,1.75)(6.00,4.30)
\pcline[linewidth=0.015,linestyle=dashed]{<->}(9.00,4.00)(10.60,4.00)\ncput*{$r$}
\psline[linewidth=0.015](9.00,1.75)(9.00,4.30)
\pcline[linewidth=0.015,linestyle=dashed]{<->}(10.60,4.50)(21.20,4.50)\ncput*{$p$}
\psline[linewidth=0.015](10.60,0.75)(10.60,4.80)
\pcline[linewidth=0.015,linestyle=dashed]{<->}(10.60,4.00)(13.60,4.00)\ncput*{$m$}
\pcline[linewidth=0.015,linestyle=dashed]{<->}(13.60,4.00)(16.60,4.00)\ncput*{$m$}
\psline[linewidth=0.015](13.60,1.75)(13.60,4.30)
\pcline[linewidth=0.015,linestyle=dashed]{<->}(16.60,4.00)(19.60,4.00)\ncput*{$m$}
\psline[linewidth=0.015](16.60,1.75)(16.60,4.30)
\pcline[linewidth=0.015,linestyle=dashed]{<->}(19.60,4.00)(21.20,4.00)\ncput*{$r$}
\psline[linewidth=0.015](19.60,1.75)(19.60,4.30)
\psline[linewidth=0.015](21.20,0.75)(21.20,4.80)
\pcline[linewidth=0.015,linestyle=dashed]{<-}(21.20,4.50)(21.80,4.50)
\pcline[linewidth=0.015,linestyle=dashed]{<-}(21.20,4.00)(21.80,4.00)

\rput( 1.5,2.5){$f_{m,p,0,0}$}
\rput( 4.5,2.5){$f_{m,p,0,1}$}
\rput( 7.5,2.5){$f_{m,p,0,2}$}
\rput( 9.8,2.5){$f_{m,p,0,3}$}

\rput( 1.5,2.0){$m$-block}
\rput( 4.5,2.0){$m$-block}
\rput( 7.5,2.0){$m$-block}
\rput( 9.8,2.0){$r$-block}

\rput(11.75,2.5){$f_{m,p,1,0}$}
\rput(14.75,2.5){$f_{m,p,1,1}$}
\rput(17.75,2.5){$f_{m,p,1,2}$}
\rput(20.25,2.5){$f_{m,p,1,3}$}

\rput(11.75,2.0){$m$-block}
\rput(14.75,2.0){$m$-block}
\rput(17.75,2.0){$m$-block}
\rput(20.25,2.0){$r$-block}

\rput( 6,1.25){$f_{m,p,0}$}
\rput(16.6,1.25){$f_{m,p,1}$}

\rput( 6.0,0.75){$p$-block}
\rput(16.6,0.75){$p$-block}
\end{pspicture}
}

\]
\noindent where the long shaded strip stand for the terms in $\Phi_{mp}$
(increasing order in the exponents). In order to avoid going off the right
margin, we cut the diagram off at the margin.
\end{example}

\begin{remark}
In Notation \ref{blocks}, the upper bound $\varphi\left(  m\right)  -1$ for
the index $i$ is from the following observation:
\[
\operatorname*{quo}\left(  \deg\Phi_{mp},p\right)  =\operatorname*{quo}\left(
\varphi\left(  mp\right)  ,p\right)  =\operatorname*{quo}\left(
\varphi\left(  m\right)  \left(  p-1\right)  ,p\right)  =\varphi\left(
m\right)  -1
\]

\end{remark}


\noindent Now we \textquotedblleft conquer\textquotedblright\ the blocks. We
begin by finding relation between blocks (Lemma \ref{relation}). Then, using
the relation, we derive an explicit expression for blocks (Theorem
\ref{explicit}). This is the \textquotedblleft key\textquotedblright\ result,
since it will play a crucial role in the remainder of the paper. We end this
subsection, by deriving a few immediate consequences (Corollaries
\ref{degdiff} and \ref{fmpi0<>0}). To present all these results compactly, we
will introduce a few notations and notions.

\begin{notation}
[Block operations]\label{operation}Let $h\ =\ \sum\limits_{k=0}^{m-1}%
h_{k}x^{k}\ \cong\ \left(  h_{0},\ldots,h_{m-1}\right)  $. Let $0\leq s<m$.
Then \vskip-0.5em $%
\begin{array}
[c]{lllllll}%
\mathsf{truncate} & : & \mathcal{T}_{s}h & = & \operatorname*{rem}\left(
h,x^{s}\right)  & \cong & \left(  h_{0},\ldots,h_{s-1}\right) \\
\mathsf{rotate} & : & \mathcal{R}_{m,s}h & = & \operatorname*{rem}\left(
x^{m-s}h,x^{m}-1\right)  & \cong & \left(  h_{s},h_{s+1},\ldots,h_{m-1}%
,h_{0},\ldots,h_{s-1}\right)
\end{array}
$
\end{notation}

\begin{definition}
[Inverse cyclotomic polynomial]
\label{def:icyc}
The $m$-th inverse cyclotomic polynomial
$\Psi_{m}$ is defined as
\[
\Psi_{m}=\frac{x^{m}-1}{\Phi_{m}}%
\]
We will use the notation $\psi(m)=\deg\Psi_{m}$. See \cite{Mor09} for various
interesting properties.
\end{definition}

\noindent From now on, through out the paper, we will assume that $p>m$ and
$0\leq i\leq\varphi\left(  m\right)  -1$.

\begin{lemma}
[Relation between blocks]\label{relation}We have

\begin{enumerate}
\item $f_{m,p,i,0}=f_{m,p,i,j}\ \ $ for $0\leq j\leq q-1$

\item \thinspace$f_{m,p,i,q}=\mathcal{T}_{r}f_{m,p,i,0}$

\item $f_{m,p,i+1,0}=\mathcal{R}_{m,r}f_{m,p,i,0}-a_{i+1}\Psi_{m}$
\end{enumerate}
\end{lemma}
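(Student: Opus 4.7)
My plan is to deduce all three parts from a single polynomial identity, namely
\[
\Phi_{mp}(x)\,(x^m - 1) \;=\; \Psi_m(x)\,\Phi_m(x^p),
\]
which is obtained by combining $\Phi_m \Psi_m = x^m - 1$ with the standard factorization $\Phi_m(x^p) = \Phi_{mp}(x)\,\Phi_m(x)$, valid since $\gcd(m,p) = 1$. The idea is to expand both sides according to the $p$-block decomposition of $\Phi_{mp}$ and then compare coefficients in two complementary ranges of each $p$-block: positions $[m, p-1]$ will give parts 1 and 2, and positions $[0, m-1]$ will give part 3.

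Writing $\Phi_m(x) = \sum_j a_j x^j$, the right-hand side becomes $\sum_j a_j \Psi_m(x)\,x^{jp}$. Since $\deg \Psi_m = \psi(m) < m$, each summand is supported inside positions $[0,m-1]$ of the $j$-th $p$-block, contributing $a_j \Psi_m$ there and $0$ in positions $[m, p-1]$. For the left-hand side, I would split the $x^m$-shift of each $p$-block as
\[
x^m f_{m,p,i}(x) \;=\; C_i(x) + x^p D_i(x), \qquad \deg C_i < p,\ \deg D_i < m,
\]
where $C_i$ is the shifted portion that stays in the $i$-th $p$-block and $D_i$ collects the top $m$ coefficients of $f_{m,p,i}$ as the overflow into the next $p$-block. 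Aggregating contributions yields
\[
\Phi_{mp}(x)(x^m - 1) \;=\; \sum_i \bigl(C_i - f_{m,p,i} + D_{i-1}\bigr)\,x^{ip},
\]
with the conventions $D_{-1} = 0$ and $f_{m,p,\varphi(m)} = 0$.

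The first step is to compare coefficients in positions $[m, p-1]$ of the $i$-th $p$-block. Here $D_{i-1}$ vanishes and the right-hand side is $0$, so $C_i$ and $f_{m,p,i}$ must agree in that range. Since the coefficient of $x^k$ in $C_i$ equals the coefficient of $x^{k-m}$ in $f_{m,p,i}$, this forces the coefficient identity $(f_{m,p,i})_{k-m} = (f_{m,p,i})_k$ for every $k \in [m, p-1]$. Rewriting this coefficient-level repetition in the $m$-block indexing (via $p = qm + r$) immediately yields $f_{m,p,i,s} = f_{m,p,i,s+1}$ for $0 \le s \le q-2$ together with $\mathcal{T}_r f_{m,p,i,q-1} = f_{m,p,i,q}$, i.e.\ parts 1 and 2.

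With parts 1 and 2 established, I would next identify $D_i$ explicitly. Using $p - m = (q-1)m + r$, the top $m$ coefficients of $f_{m,p,i}$ consist of the top $m-r$ entries of $f_{m,p,i,q-1} = f_{m,p,i,0}$ followed by all $r$ entries of $f_{m,p,i,q} = \mathcal{T}_r f_{m,p,i,0}$, and repositioning these at positions $[0,m-1]$ reassembles them into the cyclic rotation $\mathcal{R}_{m,r} f_{m,p,i,0}$; hence $D_i = \mathcal{R}_{m,r} f_{m,p,i,0}$. Matching coefficients in positions $[0, m-1]$ of the $(i+1)$-th $p$-block of the main identity then gives $-f_{m,p,i+1,0} + \mathcal{R}_{m,r} f_{m,p,i,0} = a_{i+1}\Psi_m$, which rearranges to part 3. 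The main obstacle will be this last bookkeeping step: verifying, via $(q+1)m - p = m - r$, that the top $m$ coefficients of $f_{m,p,i}$ really reassemble into exactly the rotation $\mathcal{R}_{m,r}$ of $f_{m,p,i,0}$, with no off-by-one slip in the rotation index.
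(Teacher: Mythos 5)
Your argument is correct, and it reaches the lemma by a genuinely different mechanism than the paper. The paper writes $\Phi_{mp}=\Phi_m(x^p)\,\frac{\Psi_m}{x^m-1}$ and expands $\frac{1}{x^m-1}$ as a formal power series, so that $\Phi_{mp}$ becomes a $p$-shifted sum of $m$-shifted copies of $C_s=-a_s\Psi_m$; all three relations are then read off from a diagram of overlapping shifted copies. You instead clear the denominator, work with the finite polynomial identity $\Phi_{mp}(x)(x^m-1)=\Psi_m(x)\Phi_m(x^p)$, and extract the relations by comparing coefficients range-by-range after explicitly tracking the overflow $D_i$ of each $p$-block under the shift by $x^m$. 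The two are algebraically the same identity manipulated in opposite directions, but your version stays entirely within polynomial arithmetic: every step is a finite coefficient comparison, which makes the argument self-contained and removes the reliance on ``observe from the figure'' that the paper's proof leans on. What the paper's power-series picture buys in exchange is that it exhibits each block directly as an accumulated sum of rotated copies of the $C_s$, which is the intuition driving the explicit formula of Theorem~\ref{explicit}; your route recovers the same information but only through the recursion of part~3. I checked the one step you flagged as delicate: with $(D_i)_t=(f_{m,p,i})_{p-m+t}$ and $p-m=(q-1)m+r$, the first $m-r$ entries of $D_i$ are $(f_{m,p,i,0})_{r},\dots,(f_{m,p,i,0})_{m-1}$ and the last $r$ entries are $(f_{m,p,i,0})_{0},\dots,(f_{m,p,i,0})_{r-1}$, which is exactly $\mathcal{R}_{m,r}f_{m,p,i,0}$ in the convention of Notation~\ref{operation}, so there is no off-by-one error and part~3 follows as you claim.
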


\begin{remark}
The above lemma shows that the blocks $f_{m,p,i,0}$ play role of ``representatives".
\end{remark}

\begin{figure}[t]
\begin{tabular} [c]{|l|}

\hline
\scalebox{0.8}{
\begin{pspicture}(0,-2.5)(19.75,2.00) 
\definecolor{MyColor}{rgb}{0.95,0.95,0.95} 
\def\polytwo{\psframe[linewidth=0.04,dimen=middle,fillstyle=solid,fillcolor=MyColor](0.0,0.3)(4.6,-0.3)
\psline[linewidth=0.04,linestyle=solid,linecolor=black](2.3,0.3)(2.3,-0.3)
} 
\pcline[linewidth=0.015,linestyle=dashed]{<->}(0,-1.6)(12.5,-1.6)\ncput*{$p$} 
\pcline[linewidth=0.015,linestyle=dashed]{<->}(0,-0.6)( 2.3,-0.6)\ncput*{$m$} 
\rput( 0.00, 0.00){\polytwo} 
\rput( 12.50,-1.00){\polytwo} 
\multirput( 1.30, 0.00)(2,0){2}{$C_0$} 
\multirput(13.70,-1.00)(2,0){2}{$C_1$} 
\rput(10,1.0){$\Phi_{m}$} 
\multirput( 4.9,0.00)(0.3,0.0){50}{$\boldsymbol{\cdot}$} 
\multirput(17.4,-1.00)(0.3,0.0){8}{$\boldsymbol{\cdot}$} 
\multirput(13.0,-1.75)(0.6,-0.075){5}{$\boldsymbol{\cdot}$} 
\end{pspicture}
}
\\\hline

\scalebox{0.8}{
\begin{pspicture}(-2.3,-7.00)(17,2.0)

\definecolor{MyColor}{rgb}{0.95,0.95,0.95} 
\def\polytwo{\psframe[linewidth=0.04,dimen=middle,fillstyle=solid,fillcolor=MyColor](0.0,0.3)(4.6,-0.3)
\psline[linewidth=0.04,linestyle=solid,linecolor=black](2.3,0.3)(2.3,-0.3)
} 

\def\polythree{\psframe[linewidth=0.04,dimen=middle,fillstyle=solid,fillcolor=MyColor](0.0,0.3)(6.9,-0.3)
\psline[linewidth=0.04,linestyle=solid,linecolor=black](2.3,0.3)(2.3,-0.3)
\psline[linewidth=0.04,linestyle=solid,linecolor=black](4.6,0.3)(4.6,-0.3)
}

\rput(1.25,1.25){$f_{m,p,i,0}$} 
\rput(-1.75, 0.00){\polytwo} 
\rput(0.25,-3.00){\polytwo} 
\multirput(-0.5, 0.00)(2,0){2}{$C_0$} 
\multirput(1.50,-3.00)(2.0,0){2}{$C_{i}$} 
\psline[linewidth=0.015,linestyle=dashed,linecolor=black](0.25,-3.75)(0.25, 0.75) 
\psline[linewidth=0.015,linestyle=dashed,linecolor=black](2.55,-3.75)(2.55, 0.75) 
\psline[linewidth=0.015,linestyle=dashed,linecolor=black](0.25, 0.75)(2.55, 0.75) 
\psline[linewidth=0.015,linestyle=dashed,linecolor=black](0.25,-3.75)(2.55,-3.75) 
\pcline[linewidth=0.015,linestyle=solid]{<-}(0.25,-3.85)(0.25, -6.25)
\rput(0.25,-6.50){$ip$} 
\pcline[linewidth=0.015,linestyle=dashed]{<->}(0.25,-5.75)(2.55,-5.75)\ncput*{$m$} 
\multirput(-2.50,  0.00)(0.3,0.0){3}{$\boldsymbol{\cdot}$} 
\multirput( 3.15,  0.00)(0.3,0.0){2}{$\boldsymbol{\cdot}$}
\multirput( 5.15, -3.00)(0.3,0.0){2}{$\boldsymbol{\cdot}$} 

\rput(6.8,1.25){$f_{m,p,i,j}$} 
\rput(3.75, 0.00){\polytwo} 
\rput(5.75,-3.00){\polytwo} 
\multirput(5.00, 0.00)(2,0){2}{$C_0$} 
\multirput(7.00,-3.00)(2,0){2}{$C_{i}$} 
\psline[linewidth=0.015,linestyle=dashed,linecolor=black]( 5.75,-3.75)(5.75, 0.75) 
\psline[linewidth=0.015,linestyle=dashed,linecolor=black]( 8.05,-3.75)(8.05, 0.75) 
\psline[linewidth=0.015,linestyle=dashed,linecolor=black]( 5.75, 0.75)(8.05, 0.75) 
\psline[linewidth=0.015,linestyle=dashed,linecolor=black]( 5.75,-3.75)(8.05,-3.75) 
\pcline[linewidth=0.015,linestyle=solid]{<-}(5.75,-3.85)(5.75, -6.25)
\rput(5.75,-6.50){$ip+jm$} 
\pcline[linewidth=0.015cm,linestyle=dashed]{<->}(5.75,-5.75)(8.05,-5.75)\ncput*{$m$} 
\multirput(8.60, 0.00)(0.3,0.0){2}{$\boldsymbol{\cdot}$} 
\multirput(10.55,-3.00)(0.3,0.0){2}{$\boldsymbol{\cdot}$}

\rput(11.75,1.25){$f_{m,p,i,q}$}
\rput(9.25, 0.00){\polythree} 
\rput(11.25,-3.00){\polytwo} 
\rput(12.25,-4.50){\polytwo} 
\multirput(10.9, 0.00)(2,0){3}{$C_0$} 
\multirput(12.80,-3.00)(2,0){2}{$C_{i}$} 
\multirput(13.75,-4.50)(2,0){2}{$C_{i+1}$} 
\psline[linewidth=0.015,linestyle=dashed,linecolor=black](11.25,-3.75)(11.25, 0.75) 
\psline[linewidth=0.015,linestyle=dashed,linecolor=black](12.25,-3.75)(12.25, 0.75) 
\psline[linewidth=0.015,linestyle=dashed,linecolor=black](11.25, 0.75)(12.25, 0.75) 
\psline[linewidth=0.015,linestyle=dashed,linecolor=black](11.25,-3.75)(12.25,-3.75) 
\pcline[linewidth=0.015,linestyle=solid]{<-}(11.25,-3.85)(11.25, -6.25)
\rput(11.05,-6.50){$ip+qm$} 
\pcline[linewidth=0.015cm,linestyle=dashed]{<->}(11.25,-5.75)(12.25,-5.75)\ncput*{$r$} 
\multirput(16.4, 0.00)(0.3,0.0){3}{$\boldsymbol{\cdot}$} 
\multirput(16.1,-3.00)(0.3,0.0){3}{$\boldsymbol{\cdot}$}
\multirput(17.00,-4.50)(0.3,0.0){3}{$\boldsymbol{\cdot}$} 

\rput(13.40,1.25){$f_{m,p,i+1,0}$} 
\psline[linewidth=0.015,linestyle=dashed,linecolor=black](12.25,-5.25)(12.25,-3.75) 
\psline[linewidth=0.015,linestyle=dashed,linecolor=black](14.55,-5.25)(14.55, 0.75) 
\psline[linewidth=0.015,linestyle=dashed,linecolor=black](12.25, 0.75)(14.55, 0.75) 
\psline[linewidth=0.015,linestyle=dashed,linecolor=black](12.25,-5.25)(14.55,-5.25) 
\pcline[linewidth=0.015cm,linestyle=dotted](13.55,-3.75)(13.55,0.75) 
\pcline[linewidth=0.015cm,linestyle=dotted](12.25,-3.75)(14.55,-3.75) 
\pcline[linewidth=0.015cm,linestyle=dashed]{<->}(13.55,-2.25)(14.55,-2.25)\ncput*{$r$} 
\pcline[linewidth=0.015,linestyle=dashed]{<->}(12.25,-5.75)(14.55,-5.75)\ncput*{$m$} 
\pcline[linewidth=0.015,linestyle=solid]{<-}(12.25,-5.35)(12.25, -6.25)
\rput(12.75,-6.50){$(i+1)p$} 

\multirput(-1.5,-0.6)(0.3,-0.4){6}{$\boldsymbol{\cdot}$} 

\end{pspicture}
}
\\\hline
\end{tabular}

\caption{Relation between blocks}%
\label{block-dia}
\end{figure}
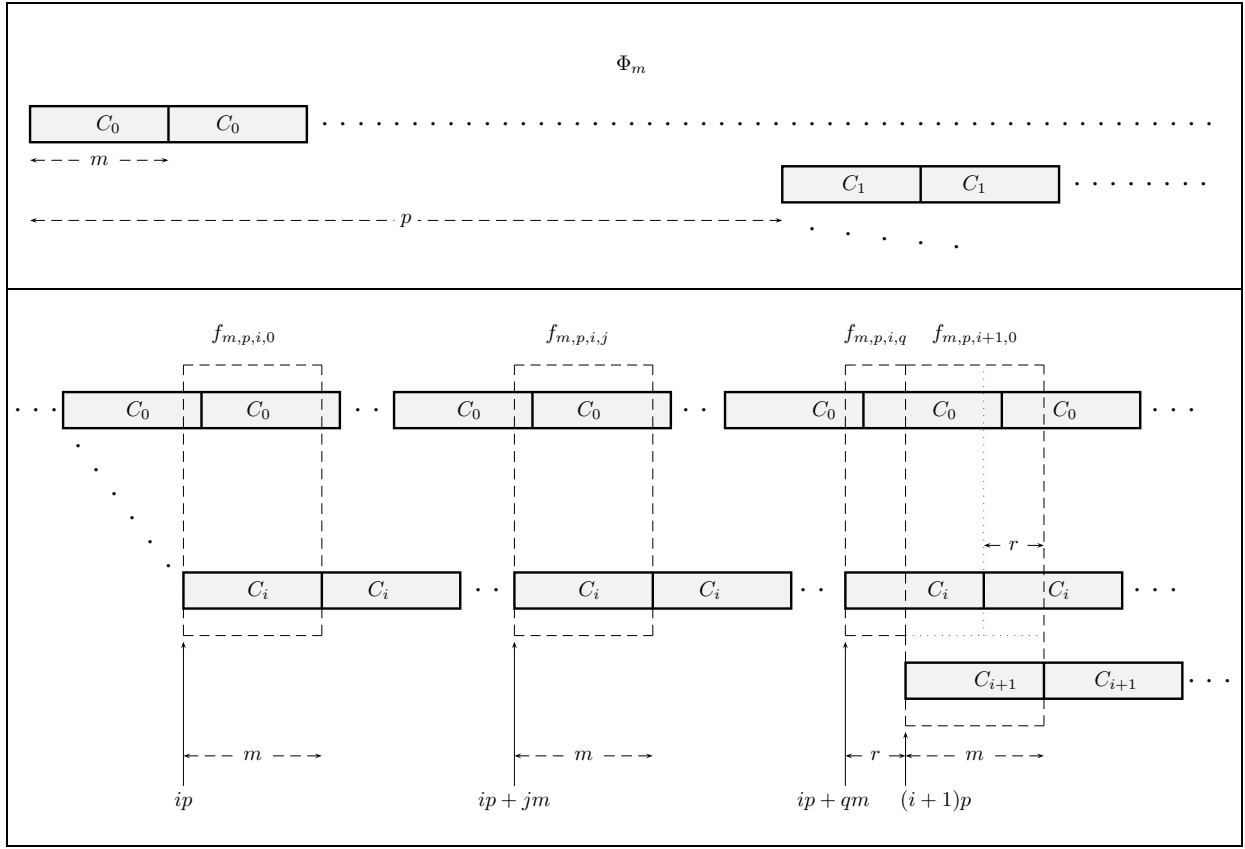

\begin{proof}
We begin by representing $\Phi_{mp}$ as a formal power series, as usual:%
\begin{equation}
\Phi_{mp}=\frac{\Phi_{m}\left(  x^{p}\right)  }{\Phi_{m}\left(  x\right)
}=\Phi_{m}\left(  x^{p}\right)  \ \frac{1}{x^{m}-1}\ \Psi_{m}=-\sum_{s\geq
0}a_{s}x^{sp}\sum_{u\geq0}x^{um}\Psi_{m}=\sum_{s\geq0}x^{sp}\sum_{u\geq
0}x^{um}C_{s} \label{power series}%
\end{equation}
where $C_{s}=-a_{s}\Psi_{m}$. Thus $\Phi_{mp}$ is the $p$-shifted sum of the
$m$-shifted sum of $C_{s}$. It is illustrated by the first diagram in Figure
\ref{block-dia}. Now we are ready to prove each claim one by one. For this we
will repeatedly refer to the second diagram in Figure \ref{block-dia}. Note
that the $m$-block $f_{m,p,i,0}$ can be obtained from $\Phi_{mp}$ by taking
the slice of length $m$ from $ip$. It is illustrated by the first block. 

\begin{enumerate}
\item $f_{m,p,i,0}=f_{m,p,i,j}\ \ $for $0\leq j\leq q-1$

The $m$-block $f_{m,p,i,j}\ \,$is obtained by taking the slice of length $m$
from $ip+jm$. It is illustrated by the second block. Comparing the two blocks
$f_{m,p,i,0}\ $and $f_{m,p,i,j}$, we observe that $f_{m,p,i,0}=f_{m,p,i,j}$,
that ism, $f_{m,p,i,j}$ does not depend on $j$.

\item $f_{m,p,i,q}=\mathcal{T}_{r}f_{m,p,i,0}$

The $r$-block $f_{m,p,i,q}$ is obtained by taking the slice of length $r$ from
$ip+qm$. It is illustrated by the third block. Comparing the two blocks
$f_{m,p,i,0}\ $and $f_{m,p,i,q}$, we observe that $f_{m,p,i,q}$ consists of
the first~$r$ terms of $f_{m,p,i,0}$, that is, $f_{m,p,i,q}=\mathcal{T}%
_{r}\ f_{m,p,i,0}$.

\item $f_{m,p,i+1,0}=\mathcal{R}_{m,r}f_{m,p,i,0}-a_{i+1}\Psi_{m}$

The $m$-block $f_{m,p,\left(  i+1\right)  ,0}\ \,$is obtained by taking the
slice of length $m$ from $\left(  i+1\right)  p$. It is illustrated by the
fourth block. Comparing the two blocks $f_{m,p,i,0}\ $and $f_{m,p,i+1,0}$, we
observe the followings.

\begin{enumerate}
\item The $m$-block $f_{m,p,i,0}\ $consists of $i+1$ rows and the $m$-block
$f_{m,p,i+1,0}$ consists of $i+2$ rows.

\item The sum of the first $i+1$ rows of $f_{m,p,i+1,0}$ is the rotation of
$f_{m,p,i,0}$ by $r$ modulo $m$. This happens because $\operatorname*{rem}%
(p,m)=r$.

\item The last row of $f_{m,p,i+1,0}$ is $C_{i+1}$
\end{enumerate}

Putting together, we have $f_{m,p,i+1,0}=\mathcal{R}_{m,r}f_{m,p,i,0}%
+C_{i+1}=\mathcal{R}_{m,r}f_{m,p,i,0}-a_{i+1}\Psi_{m}$.
\end{enumerate}
\end{proof}

\begin{theorem}
[Explicit expression for blocks]\label{explicit}We have%
\[
f_{m,p,i,0}=\Theta_{i}\Psi_{m}%
\]
\noindent where $\Theta_{i}$ is defined by
\[
\Theta_{i}=\mathrm{rem}\left(  w_{i}(x^{m-r}),\Phi_{m}\right)
\ \ \ \ \ \ \ w_{i}(x)=-\sum\limits_{0\leq s\leq i}a_{s}x^{i-s}%
\ \ \ \ \ \ \ \Phi_{m}=\sum\limits_{s}a_{s}x^{s}%
\]

\end{theorem}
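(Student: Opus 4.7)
I would proceed by induction on $i$, combining the block-to-block recurrence
\[
f_{m,p,i+1,0}=\mathcal{R}_{m,r}f_{m,p,i,0}-a_{i+1}\Psi_m
\]
from Lemma \ref{relation}(3) with the natural recurrence satisfied by the $\Theta_i$'s. Unfolding the definition gives $w_{i+1}(x)=x\,w_i(x)-a_{i+1}$, hence $w_{i+1}(x^{m-r})=x^{m-r}w_i(x^{m-r})-a_{i+1}$, and reducing modulo $\Phi_m$ yields
\[
\Theta_{i+1}=\mathrm{rem}\bigl(x^{m-r}\Theta_i,\Phi_m\bigr)-a_{i+1}.
\]
The whole argument thus reduces to matching these two recurrences after multiplication by $\Psi_m$.

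For the base case $i=0$, I would compute both sides directly: $\Theta_0=\mathrm{rem}(-a_0,\Phi_m)=-a_0$, so $\Theta_0\Psi_m=-a_0\Psi_m=C_0$, while the series expansion \eqref{power series} shows that the first $m$-slice of $\Phi_{mp}$ is exactly $C_0$, giving $f_{m,p,0,0}=C_0$.

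For the inductive step, the crux — and what I expect to be the main obstacle to spot — is the following intertwining identity: because $x^m-1=\Phi_m\Psi_m$, multiplication by $\Psi_m$ converts reduction modulo $\Phi_m$ into reduction modulo $x^m-1$. Concretely, writing $x^{m-r}\Theta_i=\Phi_m\,q+R$ with $\deg R<\varphi(m)$ and multiplying by $\Psi_m$ gives $x^{m-r}\Theta_i\Psi_m=(x^m-1)q+R\Psi_m$; since $\deg(R\Psi_m)<\varphi(m)+\psi(m)=m$, this identifies $R\Psi_m$ as the remainder, so
\[
\mathcal{R}_{m,r}(\Theta_i\Psi_m)=\mathrm{rem}\bigl(x^{m-r}\Theta_i,\Phi_m\bigr)\,\Psi_m.
\]
Substituting this into Lemma \ref{relation}(3) and applying the $\Theta$-recurrence above yields $f_{m,p,i+1,0}=\Theta_{i+1}\Psi_m$, closing the induction. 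The conceptual point is that $\mathcal{R}_{m,r}$ naturally lives in $\mathbb{Z}[x]/(x^m-1)$ while $\Theta_i$ is a representative in $\mathbb{Z}[x]/\Phi_m$, and multiplication by $\Psi_m$ is precisely the bridge between these two quotient rings; once this bridge is recognized, everything else is routine bookkeeping.
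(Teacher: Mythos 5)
Your proposal is correct and follows essentially the same route as the paper: induction on $i$ using Lemma \ref{relation}-3, with the crux being that $\mathrm{rem}\left(x^{m-r}\Theta_i\Psi_m,\ \Phi_m\Psi_m\right)=\mathrm{rem}\left(x^{m-r}\Theta_i,\ \Phi_m\right)\Psi_m$, which is exactly the paper's ``factoring out $\Psi_m$'' step (you justify it more explicitly via the degree bound $\deg(R\Psi_m)<\varphi(m)+\psi(m)=m$, and you package the unfolding of $\Theta_i$ as the recurrence $\Theta_{i+1}=\mathrm{rem}\left(x^{m-r}\Theta_i,\Phi_m\right)-a_{i+1}$ rather than expanding the sum, but these are presentational differences only).
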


\begin{proof}
We prove the claim by induction on $i$. When $i=0$. The claim is immediate
from%
\begin{align*}
f_{m,p,0,j}  &  =-a_{0}\Psi_{m} &  &  \text{from \eqref{power series}}\\
\Theta_{0}\Psi_{m}  &  =\operatorname*{rem}\left(  w_{0}(x^{m-r}),\Psi
_{m}\right)  \Psi_{m}=-a_{0}\Psi_{m} &
\end{align*}
Assume that the claim is true up to $i-1$. It suffices to show the claim holds
for $i$. Note
\begin{align*}
f_{m,p,i,j}  &  =\mathcal{R}_{m,r}f_{m,p,i-1,j}-a_{i}\Psi_{m} &  &  \text{from
Lemma \ref{relation}-3}\\
&  =\operatorname*{rem}\left(  x^{m-r}f_{m,p,i-1,j},~x^{m}-1\right)
-a_{i}\Psi_{m} &  &  \text{from Notation \ref{operation}}\\
&  =\operatorname*{rem}\left(  x^{m-r}\Theta_{i-1}\Psi_{m},~x^{m}-1\right)
-a_{i}\Psi_{m} &  &  \text{from the induction hypothesis}\\
&  =\operatorname*{rem}\left(  x^{m-r}\Theta_{i-1}\Psi_{m},~\Phi_{m}\Psi
_{m}\right)  -a_{i}\Psi_{m} &  &  \text{since }x^{m}-1=\Phi_{m}\Psi_{m}\\
&  =\left(  \operatorname*{rem}\left(  x^{m-r}\Theta_{i-1},~\Phi_{m}\right)
-a_{i}\right)  \Psi_{m} &  &  \text{by factoring out }\Psi_{m}\\
&  =\operatorname*{rem}\left(  x^{m-r}\Theta_{i-1}-a_{i},~\Phi_{m}\right)
\Psi_{m} &  &  \text{since }\deg\Phi_{m}>\deg a_{i}\\
&  =\operatorname*{rem}\left(  -x^{m-r}\sum_{s=0}^{i-1}a_{s}x^{(m-r)(i-1-s)}%
-a_{i},~\Phi_{m}\right)  \Psi_{m} &  &  \text{from the definition of }%
\Theta_{i-1}\\
&  =\operatorname*{rem}\left(  -\sum_{s=0}^{i-1}a_{s}x^{(m-r)(i-s)}%
-a_{i},~\Phi_{m}\right)  \Psi_{m} &  &  \text{by pushing }x^{m-r}\ \text{into
the summation}\\
&  =\operatorname*{rem}\left(  -\sum_{s=0}^{i}a_{s}x^{(m-r)(i-s)},~\Phi
_{m}\right)  \Psi_{m} &  &  \text{since }a_{i}=a_{i}x^{\left(  m-r\right)
\left(  i-i\right)  }\\
&  =\Theta_{i}\Psi_{m} &  &  \text{from the definition of }\Theta_{i}%
\end{align*}
as desired
\end{proof}

\noindent From the explicit expression of $m$-blocks, we can extract two
immediate but useful properties of~$m$-blocks.

\begin{corollary}
[Non-zero]\label{fmpi0<>0}$f_{m,p,i,0}\neq0$
\end{corollary}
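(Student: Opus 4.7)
The plan is to invoke Theorem \ref{explicit}, which gives $f_{m,p,i,0}=\Theta_i\Psi_m$. Since $\Psi_m\neq 0$, the corollary reduces to showing $\Theta_i\neq 0$; by the definition of $\Theta_i$ as a remainder modulo $\Phi_m$, this is equivalent to $\Phi_m\nmid w_i(x^{m-r})$.

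The key observation I would use is that $\gcd(r,m)=1$. When $m>1$, every prime factor of $m$ is at most $m<p$, so $\gcd(p,m)=1$ and therefore $r=\operatorname{rem}(p,m)$ is coprime to $m$ with $0<r<m$; in particular $\gcd(m-r,m)=1$. Consequently the map $\zeta\mapsto \zeta^{m-r}$ is a bijection on the set of primitive $m$-th roots of unity.

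From this bijection I would conclude that $w_i(x^{m-r})$ vanishes at every primitive $m$-th root of unity if and only if $w_i$ does, i.e., $\Phi_m\mid w_i(x^{m-r})$ iff $\Phi_m\mid w_i$. But $\deg w_i\le i\le\varphi(m)-1<\varphi(m)=\deg\Phi_m$, so this divisibility forces $w_i=0$. On the other hand, the coefficient of $x^i$ in $w_i$ equals $-a_0=-\Phi_m(0)=-1$, using the standard fact that $\Phi_m(0)=1$ for $m\ge 2$. Hence $w_i\neq 0$, giving the required contradiction and yielding $\Theta_i\neq 0$. The degenerate case $m=1$ is immediate since then $\varphi(m)=1$ and $w_0=-a_0=1\neq 0$.

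There is no real obstacle here: the structural work was already absorbed into Theorem \ref{explicit}, and what remains is a short roots-of-unity argument combined with a degree count. The only mild subtlety is verifying $\gcd(r,m)=1$ so that the substitution $x\mapsto x^{m-r}$ permutes the roots of $\Phi_m$, but this follows from the hypothesis $p>m$ together with $p$ being prime.
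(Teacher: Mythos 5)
Your proposal is correct and follows essentially the same route as the paper: both reduce via Theorem \ref{explicit} to showing $\Phi_m\nmid w_i(x^{m-r})$, then use that $\gcd(m-r,m)=1$ makes $\zeta^{m-r}$ a primitive $m$-th root of unity, and conclude by the degree count $\deg w_i<\varphi(m)$ together with $w_i\neq 0$ (leading coefficient $-a_0$).
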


\begin{proof}
Suppose that $f_{m,p,i,0}=0$ for some $i$. Since $\Psi_{m}\neq0$, from Theorem
\ref{explicit}, we have%
\[
\Phi_{m}\left(  x\right)  \ |\ w_{i}(x^{m-r})
\]
Let $\zeta$ be an $m$-th primitive root of unity. Then we have $w_{i}%
(\zeta^{m-r})=0$, because $m$ and $m-r$ are relatively prime. Note that
$\zeta^{m-r}$ is also an $m$-th primitive root of unity and the degree of its
minimal polynomial is $\varphi(m)$. However, $w_{i}\left(  x\right)
=a_{0}x^{i}+\cdots\neq0$ and $\deg w_{i}\left(  x\right)  $ $<\varphi(m)$. Contradiction.
\end{proof}

\begin{corollary}
[Degree difference]\label{degdiff}$\deg f_{m,p,i,0}-\operatorname*{tdeg}%
f_{m,p,i,0}\geq\psi\left(  m\right)  $ and the equality holds when $i=0$. 
\end{corollary}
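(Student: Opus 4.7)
The plan is to deduce the inequality directly from the factorization $f_{m,p,i,0} = \Theta_i\,\Psi_m$ supplied by Theorem \ref{explicit}, together with the fact that in the integral domain $\mathbb{Z}[x]$ both $\deg$ and $\operatorname{tdeg}$ are additive on products of nonzero polynomials (the leading and trailing coefficients of a product being the products of the corresponding coefficients of the factors). The first step is to observe that $\Psi_m$ has no root at $0$: indeed $\Psi_m \mid x^m-1$ and $0$ is not an $m$-th root of unity, so $\Psi_m(0)\neq 0$. Consequently $\deg\Psi_m = \psi(m)$ while $\operatorname{tdeg}\Psi_m = 0$.

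Next, Corollary \ref{fmpi0<>0} together with $\Psi_m\neq 0$ forces $\Theta_i\neq 0$, so additivity of $\deg$ and $\operatorname{tdeg}$ applies. Subtracting the two identities
\[ \deg f_{m,p,i,0} = \deg\Theta_i + \psi(m), \qquad \operatorname{tdeg} f_{m,p,i,0} = \operatorname{tdeg}\Theta_i \]
yields
\[ \deg f_{m,p,i,0} - \operatorname{tdeg} f_{m,p,i,0} = \bigl(\deg\Theta_i - \operatorname{tdeg}\Theta_i\bigr) + \psi(m) \;\geq\; \psi(m), \]
since every nonzero polynomial satisfies $\deg\geq\operatorname{tdeg}$.

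For the equality claim at $i=0$, I would unpack the definition of $\Theta_0$ from Theorem \ref{explicit}: since $w_0(x)=-a_0$ is already a constant, $w_0(x^{m-r})=-a_0$, and so $\Theta_0 = \operatorname{rem}(-a_0,\Phi_m) = -a_0$. The constant $a_0 = \Phi_m(0)$ is nonzero, hence $\Theta_0$ is a nonzero constant, giving $\deg\Theta_0 - \operatorname{tdeg}\Theta_0 = 0$ and therefore equality. I do not expect any real obstacle: the corollary is a clean by-product of the explicit factorization in Theorem \ref{explicit} and of the triviality that $\deg$ and $\operatorname{tdeg}$ coincide precisely on nonzero monomials.
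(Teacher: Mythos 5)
Your proposal is correct and follows essentially the same route as the paper: factor via Theorem \ref{explicit}, use additivity of $\deg$ and $\operatorname{tdeg}$ together with $\operatorname{tdeg}\Psi_m=0$ to reduce to $\deg\Theta_i-\operatorname{tdeg}\Theta_i\geq 0$, and compute $\Theta_0$ explicitly for the equality case (the paper states $\Theta_0=-1$, i.e.\ $a_0=1$; your weaker observation that $\Theta_0=-a_0$ is a nonzero constant suffices equally well). The only additions are your explicit justifications that $\Psi_m(0)\neq 0$ and that $\Theta_i\neq 0$, which the paper leaves implicit.
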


\begin{proof}
Note
\begin{align*}
&  \ \deg f_{m,p,i,0}-\operatorname*{tdeg}f_{m,p,i,0} &  & \\
=  &  \ \deg\left(  \Theta_{i}\Psi_{m}\right)  -\operatorname*{tdeg}\left(
\Theta_{i}\Psi_{m}\right)  &  &  \text{from Theorem \ref{explicit}}\\
=  &  \ \left(  \deg\Theta_{i}+\deg\Psi_{m}\right)  -\left(
\operatorname*{tdeg}\Theta_{i}+\operatorname*{tdeg}\Psi_{m}\right)  &  &
\text{by expanding}\\
=  &  \ \deg\Theta_{i}+\psi\left(  m\right)  -\left(  \operatorname*{tdeg}%
\Theta_{i}+0\right)  &  &  \text{from $\operatorname*{tdeg}\Psi_{m}=0$}\\
=  &  \ \deg\Theta_{i}-\operatorname*{tdeg}\Theta_{i}+\psi\left(  m\right)  &
&  \text{by simplifying}%
\end{align*}
Hence
\begin{align*}
&  \deg f_{m,p,i,0}-\operatorname*{tdeg}f_{m,p,i,0}\ \ \geq\ \ \psi\left(
m\right)  &  &  \text{since }\operatorname*{tdeg}\Theta_{i}\leq\deg\Theta
_{i}\\
&  \deg f_{m,p,i,0}-\operatorname*{tdeg}f_{m,p,i,0}\ \ =\ \ \psi\left(
m\right)  &  &  \text{since }\Theta_{0}=-1
\end{align*}

\end{proof}

\section{Gaps}
\label{sec:gaps}
In this section, 
we will prove the main result of this paper (Theorem~\ref{main}) utilizing the block structures found in the previous section (Section~\ref{sec:blocks}).
For this, let us first recall the precise  definition of maximum gap~\cite{HLLP}.
\begin{definition}[Maximum gap]
\label{def:maxgap} Let $f=a_1x^{e_1}+a_2x^{e_2}+\cdots+a_nx^{e_n}$, where $a_1, \cdots,a_n\neq 0$ and\ $e_1<\cdots<e_n$. Then the maximum gap of $f$, written as $g(f)$ is given by\[g(f)=\max_{0 \leq i<n}e_{i+1}-e_i\]
\end{definition}

\noindent  Lemma \ref{relation}-1, Theorem~\ref{explicit} and
Corollary~\ref{fmpi0<>0} motivate the following notations of the gaps within
and between the blocks in~$\Phi_{mp}$. \textcolor{red}{}

\begin{notation}
[Block gaps]\label{block gaps}Let%

\begin{tabular}
[c]{llll}%
$g_{m,i}^{\mathsf{w}}$ & : & \emph{maximum gap within $f_{m,p,i,0}$} &
\emph{(the first $m$-block in $i$-th $p$-block)}\\
$g_{r,i}^{\mathsf{w}}$ & : & \emph{maximum gap within $f_{m,p,i,q}$} &
\emph{(the $r$-block in $i$-th $p$-block)}\\
$g_{m,i}^{\mathsf{b}}$ & : & \emph{gap between $f_{m,p,i,0}$ and $f_{m,p,i,1}%
$} & \emph{(the first two $m$-blocks within the $i$-th $p$-block)}\\
$g_{r,i}^{\mathsf{b}}$ & : & \emph{gap between $f_{m,p,i,q-1}$ and
$f_{m,p,i,q}$} & \emph{(the last $m$-block and the $r$-block within the $i$-th
$p$-block)}\\
$g_{p,i}^{\mathsf{b}}$ & : & \emph{gap between $f_{m,p,i-1}$ and $f_{m,p,i}$}
& \emph{(the $i$-th $p$-block and the $(i-1)$-th $p$-block)}%
\end{tabular}

\medskip

\noindent In some cases, the above quantities are meaningless. In such cases,
for the sake of notational simplicity, we will assign $0$ to them, that is,%

\begin{tabular}
[c]{lll}%
$g_{r,i}^{\mathsf{w}}=0$ & if & $f_{m,p,i,q}=0$\\
$g_{m,i}^{\mathsf{b}}=0$ & if & $q=1$\\
$g_{r,i}^{\mathsf{b}}=0$ & if & $f_{m,p,i,q}=0$\\
$g_{p,i}^{\mathsf{b}}=0$ & if & \thinspace$i=0$%
\end{tabular}

\end{notation}

\begin{example}
We illustrate the notations for $m=15$ and $p=53$ as in
Example~\ref{example_block} . The following diagram illustrates the division
of $\Phi_{mp}$ from the $p$-block when $i=2$ to the beginning of the $p$-block
when $i=4$. The dark bar stands for non-zero term and the white bar stands for
zero term.%

\[
\noindent \scalebox{0.75}{
\begin{pspicture}(-0.5,1.75)(22,4.5)

\definecolor{MyColor}{rgb}{0.95,0.95,0.95} 

\psframe[linewidth=0.015,linestyle=none,dimen=middle,fillstyle=solid,fillcolor=MyColor](0.00,3.00)(22.00,3.50)
\pcline[linewidth=0.015,linestyle=dashed]{->}(-0.40,4.50)(0.00,4.50)
\pcline[linewidth=0.015,linestyle=dashed]{->}(-0.40,4.00)(0.00,4.00)
\psframe[linewidth=0.015,linestyle=none,dimen=middle,fillstyle=solid,fillcolor=MyColor](-0.40,3.00)(0.00,3.50)
\rput(0.00,5.00){$i=2$}
\pcline[linewidth=0.015,linestyle=dashed]{<->}(0.00,4.50)(10.60,4.50)\ncput*{$p$}
\psline[linewidth=0.015](0.00,1.50)(0.00,4.80)
\pcline[linewidth=0.015,linestyle=dashed]{<->}(0.00,4.00)(3.00,4.00)\ncput*{$m$}
\pcline[linewidth=0.015,linestyle=dashed]{<->}(3.00,4.00)(6.00,4.00)\ncput*{$m$}
\psline[linewidth=0.015](3.00,2.50)(3.00,4.30)
\pcline[linewidth=0.015,linestyle=dashed]{<->}(6.00,4.00)(9.00,4.00)\ncput*{$m$}
\psline[linewidth=0.015](6.00,2.50)(6.00,4.30)
\pcline[linewidth=0.015,linestyle=dashed]{<->}(9.00,4.00)(10.60,4.00)\ncput*{$r$}
\psline[linewidth=0.015](9.00,2.50)(9.00,4.30)
\rput(10.60,5.00){$i=3$}
\pcline[linewidth=0.015,linestyle=dashed]{<->}(10.60,4.50)(21.20,4.50)\ncput*{$p$}
\psline[linewidth=0.015](10.60,1.50)(10.60,4.80)
\pcline[linewidth=0.015,linestyle=dashed]{<->}(10.60,4.00)(13.60,4.00)\ncput*{$m$}
\pcline[linewidth=0.015,linestyle=dashed]{<->}(13.60,4.00)(16.60,4.00)\ncput*{$m$}
\psline[linewidth=0.015](13.60,2.50)(13.60,4.30)
\pcline[linewidth=0.015,linestyle=dashed]{<->}(16.60,4.00)(19.60,4.00)\ncput*{$m$}
\psline[linewidth=0.015](16.60,2.50)(16.60,4.30)
\pcline[linewidth=0.015,linestyle=dashed]{<->}(19.60,4.00)(21.20,4.00)\ncput*{$r$}
\psline[linewidth=0.015](19.60,2.50)(19.60,4.30)
\psline[linewidth=0.015](21.20,1.50)(21.20,4.80)
\pcline[linewidth=0.015,linestyle=dashed]{<-}(21.20,4.50)(22.00,4.50)
\pcline[linewidth=0.015,linestyle=dashed]{<-}(21.20,4.00)(22.00,4.00)
\rput(21.20,5.00){$i=4$}
\psframe[linewidth=0.015,linestyle=solid,dimen=middle,fillstyle=solid,fillcolor=darkgray](0.00,3.00)(0.20,3.50)
\psframe[linewidth=0.015,linestyle=solid,dimen=middle,fillstyle=solid,fillcolor=darkgray](0.20,3.00)(0.40,3.50)
\psframe[linewidth=0.015,linestyle=solid,dimen=middle,fillstyle=solid,fillcolor=white](0.40,3.00)(0.60,3.50)
\psframe[linewidth=0.015,linestyle=solid,dimen=middle,fillstyle=solid,fillcolor=white](0.60,3.00)(0.80,3.50)
\psframe[linewidth=0.015,linestyle=solid,dimen=middle,fillstyle=solid,fillcolor=darkgray](0.80,3.00)(1.00,3.50)
\psframe[linewidth=0.015,linestyle=solid,dimen=middle,fillstyle=solid,fillcolor=darkgray](1.00,3.00)(1.20,3.50)
\psframe[linewidth=0.015,linestyle=solid,dimen=middle,fillstyle=solid,fillcolor=darkgray](1.20,3.00)(1.40,3.50)
\psframe[linewidth=0.015,linestyle=solid,dimen=middle,fillstyle=solid,fillcolor=darkgray](1.40,3.00)(1.60,3.50)
\psframe[linewidth=0.015,linestyle=solid,dimen=middle,fillstyle=solid,fillcolor=darkgray](1.60,3.00)(1.80,3.50)
\psframe[linewidth=0.015,linestyle=solid,dimen=middle,fillstyle=solid,fillcolor=darkgray](1.80,3.00)(2.00,3.50)
\psframe[linewidth=0.015,linestyle=solid,dimen=middle,fillstyle=solid,fillcolor=white](2.00,3.00)(2.20,3.50)
\psframe[linewidth=0.015,linestyle=solid,dimen=middle,fillstyle=solid,fillcolor=white](2.20,3.00)(2.40,3.50)
\psframe[linewidth=0.015,linestyle=solid,dimen=middle,fillstyle=solid,fillcolor=darkgray](2.40,3.00)(2.60,3.50)
\psframe[linewidth=0.015,linestyle=solid,dimen=middle,fillstyle=solid,fillcolor=darkgray](2.60,3.00)(2.80,3.50)
\psframe[linewidth=0.015,linestyle=solid,dimen=middle,fillstyle=solid,fillcolor=darkgray](2.80,3.00)(3.00,3.50)
\psframe[linewidth=0.015,linestyle=solid,dimen=middle,fillstyle=solid,fillcolor=darkgray](3.00,3.00)(3.20,3.50)
\psframe[linewidth=0.015,linestyle=solid,dimen=middle,fillstyle=solid,fillcolor=darkgray](3.20,3.00)(3.40,3.50)
\psframe[linewidth=0.015,linestyle=solid,dimen=middle,fillstyle=solid,fillcolor=white](3.40,3.00)(3.60,3.50)
\psframe[linewidth=0.015,linestyle=solid,dimen=middle,fillstyle=solid,fillcolor=white](3.60,3.00)(3.80,3.50)
\psframe[linewidth=0.015,linestyle=solid,dimen=middle,fillstyle=solid,fillcolor=darkgray](3.80,3.00)(4.00,3.50)
\psframe[linewidth=0.015,linestyle=solid,dimen=middle,fillstyle=solid,fillcolor=darkgray](4.00,3.00)(4.20,3.50)
\psframe[linewidth=0.015,linestyle=solid,dimen=middle,fillstyle=solid,fillcolor=darkgray](4.20,3.00)(4.40,3.50)
\psframe[linewidth=0.015,linestyle=solid,dimen=middle,fillstyle=solid,fillcolor=darkgray](4.40,3.00)(4.60,3.50)
\psframe[linewidth=0.015,linestyle=solid,dimen=middle,fillstyle=solid,fillcolor=darkgray](4.60,3.00)(4.80,3.50)
\psframe[linewidth=0.015,linestyle=solid,dimen=middle,fillstyle=solid,fillcolor=darkgray](4.80,3.00)(5.00,3.50)
\psframe[linewidth=0.015,linestyle=solid,dimen=middle,fillstyle=solid,fillcolor=white](5.00,3.00)(5.20,3.50)
\psframe[linewidth=0.015,linestyle=solid,dimen=middle,fillstyle=solid,fillcolor=white](5.20,3.00)(5.40,3.50)
\psframe[linewidth=0.015,linestyle=solid,dimen=middle,fillstyle=solid,fillcolor=darkgray](5.40,3.00)(5.60,3.50)
\psframe[linewidth=0.015,linestyle=solid,dimen=middle,fillstyle=solid,fillcolor=darkgray](5.60,3.00)(5.80,3.50)
\psframe[linewidth=0.015,linestyle=solid,dimen=middle,fillstyle=solid,fillcolor=darkgray](5.80,3.00)(6.00,3.50)
\psframe[linewidth=0.015,linestyle=solid,dimen=middle,fillstyle=solid,fillcolor=darkgray](6.00,3.00)(6.20,3.50)
\psframe[linewidth=0.015,linestyle=solid,dimen=middle,fillstyle=solid,fillcolor=darkgray](6.20,3.00)(6.40,3.50)
\psframe[linewidth=0.015,linestyle=solid,dimen=middle,fillstyle=solid,fillcolor=white](6.40,3.00)(6.60,3.50)
\psframe[linewidth=0.015,linestyle=solid,dimen=middle,fillstyle=solid,fillcolor=white](6.60,3.00)(6.80,3.50)
\psframe[linewidth=0.015,linestyle=solid,dimen=middle,fillstyle=solid,fillcolor=darkgray](6.80,3.00)(7.00,3.50)
\psframe[linewidth=0.015,linestyle=solid,dimen=middle,fillstyle=solid,fillcolor=darkgray](7.00,3.00)(7.20,3.50)
\psframe[linewidth=0.015,linestyle=solid,dimen=middle,fillstyle=solid,fillcolor=darkgray](7.20,3.00)(7.40,3.50)
\psframe[linewidth=0.015,linestyle=solid,dimen=middle,fillstyle=solid,fillcolor=darkgray](7.40,3.00)(7.60,3.50)
\psframe[linewidth=0.015,linestyle=solid,dimen=middle,fillstyle=solid,fillcolor=darkgray](7.60,3.00)(7.80,3.50)
\psframe[linewidth=0.015,linestyle=solid,dimen=middle,fillstyle=solid,fillcolor=darkgray](7.80,3.00)(8.00,3.50)
\psframe[linewidth=0.015,linestyle=solid,dimen=middle,fillstyle=solid,fillcolor=white](8.00,3.00)(8.20,3.50)
\psframe[linewidth=0.015,linestyle=solid,dimen=middle,fillstyle=solid,fillcolor=white](8.20,3.00)(8.40,3.50)
\psframe[linewidth=0.015,linestyle=solid,dimen=middle,fillstyle=solid,fillcolor=darkgray](8.40,3.00)(8.60,3.50)
\psframe[linewidth=0.015,linestyle=solid,dimen=middle,fillstyle=solid,fillcolor=darkgray](8.60,3.00)(8.80,3.50)
\psframe[linewidth=0.015,linestyle=solid,dimen=middle,fillstyle=solid,fillcolor=darkgray](8.80,3.00)(9.00,3.50)
\psframe[linewidth=0.015,linestyle=solid,dimen=middle,fillstyle=solid,fillcolor=darkgray](9.00,3.00)(9.20,3.50)
\psframe[linewidth=0.015,linestyle=solid,dimen=middle,fillstyle=solid,fillcolor=darkgray](9.20,3.00)(9.40,3.50)
\psframe[linewidth=0.015,linestyle=solid,dimen=middle,fillstyle=solid,fillcolor=white](9.40,3.00)(9.60,3.50)
\psframe[linewidth=0.015,linestyle=solid,dimen=middle,fillstyle=solid,fillcolor=white](9.60,3.00)(9.80,3.50)
\psframe[linewidth=0.015,linestyle=solid,dimen=middle,fillstyle=solid,fillcolor=darkgray](9.80,3.00)(10.00,3.50)
\psframe[linewidth=0.015,linestyle=solid,dimen=middle,fillstyle=solid,fillcolor=darkgray](10.00,3.00)(10.20,3.50)
\psframe[linewidth=0.015,linestyle=solid,dimen=middle,fillstyle=solid,fillcolor=darkgray](10.20,3.00)(10.40,3.50)
\psframe[linewidth=0.015,linestyle=solid,dimen=middle,fillstyle=solid,fillcolor=darkgray](10.40,3.00)(10.60,3.50)
\psframe[linewidth=0.015,linestyle=solid,dimen=middle,fillstyle=solid,fillcolor=white](10.60,3.00)(10.80,3.50)
\psframe[linewidth=0.015,linestyle=solid,dimen=middle,fillstyle=solid,fillcolor=white](10.80,3.00)(11.00,3.50)
\psframe[linewidth=0.015,linestyle=solid,dimen=middle,fillstyle=solid,fillcolor=darkgray](11.00,3.00)(11.20,3.50)
\psframe[linewidth=0.015,linestyle=solid,dimen=middle,fillstyle=solid,fillcolor=white](11.20,3.00)(11.40,3.50)
\psframe[linewidth=0.015,linestyle=solid,dimen=middle,fillstyle=solid,fillcolor=darkgray](11.40,3.00)(11.60,3.50)
\psframe[linewidth=0.015,linestyle=solid,dimen=middle,fillstyle=solid,fillcolor=white](11.60,3.00)(11.80,3.50)
\psframe[linewidth=0.015,linestyle=solid,dimen=middle,fillstyle=solid,fillcolor=darkgray](11.80,3.00)(12.00,3.50)
\psframe[linewidth=0.015,linestyle=solid,dimen=middle,fillstyle=solid,fillcolor=white](12.00,3.00)(12.20,3.50)
\psframe[linewidth=0.015,linestyle=solid,dimen=middle,fillstyle=solid,fillcolor=darkgray](12.20,3.00)(12.40,3.50)
\psframe[linewidth=0.015,linestyle=solid,dimen=middle,fillstyle=solid,fillcolor=white](12.40,3.00)(12.60,3.50)
\psframe[linewidth=0.015,linestyle=solid,dimen=middle,fillstyle=solid,fillcolor=white](12.60,3.00)(12.80,3.50)
\psframe[linewidth=0.015,linestyle=solid,dimen=middle,fillstyle=solid,fillcolor=darkgray](12.80,3.00)(13.00,3.50)
\psframe[linewidth=0.015,linestyle=solid,dimen=middle,fillstyle=solid,fillcolor=darkgray](13.00,3.00)(13.20,3.50)
\psframe[linewidth=0.015,linestyle=solid,dimen=middle,fillstyle=solid,fillcolor=darkgray](13.20,3.00)(13.40,3.50)
\psframe[linewidth=0.015,linestyle=solid,dimen=middle,fillstyle=solid,fillcolor=darkgray](13.40,3.00)(13.60,3.50)
\psframe[linewidth=0.015,linestyle=solid,dimen=middle,fillstyle=solid,fillcolor=white](13.60,3.00)(13.80,3.50)
\psframe[linewidth=0.015,linestyle=solid,dimen=middle,fillstyle=solid,fillcolor=white](13.80,3.00)(14.00,3.50)
\psframe[linewidth=0.015,linestyle=solid,dimen=middle,fillstyle=solid,fillcolor=darkgray](14.00,3.00)(14.20,3.50)
\psframe[linewidth=0.015,linestyle=solid,dimen=middle,fillstyle=solid,fillcolor=white](14.20,3.00)(14.40,3.50)
\psframe[linewidth=0.015,linestyle=solid,dimen=middle,fillstyle=solid,fillcolor=darkgray](14.40,3.00)(14.60,3.50)
\psframe[linewidth=0.015,linestyle=solid,dimen=middle,fillstyle=solid,fillcolor=white](14.60,3.00)(14.80,3.50)
\psframe[linewidth=0.015,linestyle=solid,dimen=middle,fillstyle=solid,fillcolor=darkgray](14.80,3.00)(15.00,3.50)
\psframe[linewidth=0.015,linestyle=solid,dimen=middle,fillstyle=solid,fillcolor=white](15.00,3.00)(15.20,3.50)
\psframe[linewidth=0.015,linestyle=solid,dimen=middle,fillstyle=solid,fillcolor=darkgray](15.20,3.00)(15.40,3.50)
\psframe[linewidth=0.015,linestyle=solid,dimen=middle,fillstyle=solid,fillcolor=white](15.40,3.00)(15.60,3.50)
\psframe[linewidth=0.015,linestyle=solid,dimen=middle,fillstyle=solid,fillcolor=white](15.60,3.00)(15.80,3.50)
\psframe[linewidth=0.015,linestyle=solid,dimen=middle,fillstyle=solid,fillcolor=darkgray](15.80,3.00)(16.00,3.50)
\psframe[linewidth=0.015,linestyle=solid,dimen=middle,fillstyle=solid,fillcolor=darkgray](16.00,3.00)(16.20,3.50)
\psframe[linewidth=0.015,linestyle=solid,dimen=middle,fillstyle=solid,fillcolor=darkgray](16.20,3.00)(16.40,3.50)
\psframe[linewidth=0.015,linestyle=solid,dimen=middle,fillstyle=solid,fillcolor=darkgray](16.40,3.00)(16.60,3.50)
\psframe[linewidth=0.015,linestyle=solid,dimen=middle,fillstyle=solid,fillcolor=white](16.60,3.00)(16.80,3.50)
\psframe[linewidth=0.015,linestyle=solid,dimen=middle,fillstyle=solid,fillcolor=white](16.80,3.00)(17.00,3.50)
\psframe[linewidth=0.015,linestyle=solid,dimen=middle,fillstyle=solid,fillcolor=darkgray](17.00,3.00)(17.20,3.50)
\psframe[linewidth=0.015,linestyle=solid,dimen=middle,fillstyle=solid,fillcolor=white](17.20,3.00)(17.40,3.50)
\psframe[linewidth=0.015,linestyle=solid,dimen=middle,fillstyle=solid,fillcolor=darkgray](17.40,3.00)(17.60,3.50)
\psframe[linewidth=0.015,linestyle=solid,dimen=middle,fillstyle=solid,fillcolor=white](17.60,3.00)(17.80,3.50)
\psframe[linewidth=0.015,linestyle=solid,dimen=middle,fillstyle=solid,fillcolor=darkgray](17.80,3.00)(18.00,3.50)
\psframe[linewidth=0.015,linestyle=solid,dimen=middle,fillstyle=solid,fillcolor=white](18.00,3.00)(18.20,3.50)
\psframe[linewidth=0.015,linestyle=solid,dimen=middle,fillstyle=solid,fillcolor=darkgray](18.20,3.00)(18.40,3.50)
\psframe[linewidth=0.015,linestyle=solid,dimen=middle,fillstyle=solid,fillcolor=white](18.40,3.00)(18.60,3.50)
\psframe[linewidth=0.015,linestyle=solid,dimen=middle,fillstyle=solid,fillcolor=white](18.60,3.00)(18.80,3.50)
\psframe[linewidth=0.015,linestyle=solid,dimen=middle,fillstyle=solid,fillcolor=darkgray](18.80,3.00)(19.00,3.50)
\psframe[linewidth=0.015,linestyle=solid,dimen=middle,fillstyle=solid,fillcolor=darkgray](19.00,3.00)(19.20,3.50)
\psframe[linewidth=0.015,linestyle=solid,dimen=middle,fillstyle=solid,fillcolor=darkgray](19.20,3.00)(19.40,3.50)
\psframe[linewidth=0.015,linestyle=solid,dimen=middle,fillstyle=solid,fillcolor=darkgray](19.40,3.00)(19.60,3.50)
\psframe[linewidth=0.015,linestyle=solid,dimen=middle,fillstyle=solid,fillcolor=white](19.60,3.00)(19.80,3.50)
\psframe[linewidth=0.015,linestyle=solid,dimen=middle,fillstyle=solid,fillcolor=white](19.80,3.00)(20.00,3.50)
\psframe[linewidth=0.015,linestyle=solid,dimen=middle,fillstyle=solid,fillcolor=darkgray](20.00,3.00)(20.20,3.50)
\psframe[linewidth=0.015,linestyle=solid,dimen=middle,fillstyle=solid,fillcolor=white](20.20,3.00)(20.40,3.50)
\psframe[linewidth=0.015,linestyle=solid,dimen=middle,fillstyle=solid,fillcolor=darkgray](20.40,3.00)(20.60,3.50)
\psframe[linewidth=0.015,linestyle=solid,dimen=middle,fillstyle=solid,fillcolor=white](20.60,3.00)(20.80,3.50)
\psframe[linewidth=0.015,linestyle=solid,dimen=middle,fillstyle=solid,fillcolor=darkgray](20.80,3.00)(21.00,3.50)
\psframe[linewidth=0.015,linestyle=solid,dimen=middle,fillstyle=solid,fillcolor=white](21.00,3.00)(21.20,3.50)
\psframe[linewidth=0.015,linestyle=solid,dimen=middle,fillstyle=solid,fillcolor=white](21.20,3.00)(21.40,3.50)
\psframe[linewidth=0.015,linestyle=solid,dimen=middle,fillstyle=solid,fillcolor=darkgray](21.40,3.00)(21.60,3.50)
\psframe[linewidth=0.015,linestyle=solid,dimen=middle,fillstyle=solid,fillcolor=darkgray](21.60,3.00)(21.80,3.50)

\psline[linewidth=0.015](0.40,3.00)(0.40,2.50)
\psline[linewidth=0.015](0.80,3.00)(0.80,2.50)
\psline[linewidth=0.015,linestyle=solid]{<->}(0.40,2.80)(0.80,2.80)
\psline[linewidth=0.015,linestyle=solid]{<-}(0.60,2.70)(0.60,2.30)
\rput(0.60,2.00){$g_{m,2}^{\mathsf{w}}$}

\psline[linewidth=0.015](9.40,3.00)(9.40,2.50)
\psline[linewidth=0.015](9.80,3.00)(9.80,2.50)
\psline[linewidth=0.015,linestyle=solid]{<->}(9.40,2.80)(9.80,2.80)
\psline[linewidth=0.015,linestyle=solid]{<-}(9.60,2.70)(9.60,2.30)
\rput(9.60,2.00){$g_{r,2}^{\mathsf{w}}$}

\psline[linewidth=0.015](11.00,3.0)(11.00,2.50)
\psline[linewidth=0.015,linestyle=solid]{<->}(10.60,2.80)(11.00,2.80)
\psline[linewidth=0.015,linestyle=solid]{<-}(10.80,2.70)(10.80,2.30)
\rput(11,2.00){$g_{p,3}^{\mathsf{b}}$}

\psline[linewidth=0.015](14.00,3.00)(14.00,2.50)
\psline[linewidth=0.015,linestyle=solid]{<->}(13.60,2.80)(14.00,2.80)
\psline[linewidth=0.015,linestyle=solid]{<-}(13.80,2.70)(13.80,2.30)
\rput(13.9,2.00){$g_{m,3}^{\mathsf{b}}$}

\psframe[linestyle=solid,dimen=outer,fillstyle=solid,fillcolor=white](19.60,3.00)(19.80,3.50)
\psframe[linestyle=solid,dimen=outer,fillstyle=solid,fillcolor=white](19.80,3.00)(20.00,3.50)
\psline[linewidth=0.015](20.00,3.00)(20.00,2.50)
\psline[linewidth=0.015,linestyle=solid]{<->}(19.60,2.80)(20.00,2.80)
\psline[linewidth=0.015,linestyle=solid]{<-}(19.80,2.70)(19.80,2.30)
\rput(19.8,2.00){$g_{r,3}^{\mathsf{b}}$}

\psline[linewidth=0.015](21.00,3.00)(21.00,2.50)
\psline[linewidth=0.015](21.40,3.00)(21.40,2.50)
\psline[linewidth=0.015,linestyle=solid]{<->}(21.00,2.80)(21.40,2.80)
\psline[linewidth=0.015,linestyle=solid]{<-}(21.25,2.70)(21.40,2.30)
\rput(21.6,2.00){$g_{p,4}^{\mathsf{b}}$}
\end{pspicture}
}

\]

\end{example}

\noindent Next we express the maximum gap of $\Phi_{mp}$ in terms of the block gaps.

\begin{lemma}
\label{g_in_blockgaps}$g\left(  \Phi_{mp}\right)  =\max\limits_{0\leq
i\leq\varphi\left(  m\right)  -1}\left\{  g_{m,i}^{\mathsf{w}},\ \ g_{r,i}%
^{\mathsf{w}},\ \ g_{m,i}^{\mathsf{b}},\ \ g_{r,i}^{\mathsf{b}},\ \ g_{p,i}%
^{\mathsf{b}}\right\}  $
\end{lemma}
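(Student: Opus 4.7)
The plan is to reduce the maximum gap of $\Phi_{mp}$ to the five block-gap quantities through a case analysis on where two consecutive nonzero exponents lie, combined with the representative reduction from Lemma~\ref{relation}. The $\ge$ direction is essentially tautological: each of $g^{\mathsf{w}}_{m,i}, g^{\mathsf{w}}_{r,i}, g^{\mathsf{b}}_{m,i}, g^{\mathsf{b}}_{r,i}, g^{\mathsf{b}}_{p,i}$ is, by its definition in Notation~\ref{block gaps}, either zero or an actual gap between consecutive nonzero exponents of $\Phi_{mp}$; hence $g(\Phi_{mp})$ dominates their maximum.

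For the $\le$ direction I would take two consecutive nonzero exponents $e_k < e_{k+1}$ of $\Phi_{mp}$ and locate $x^{e_k}$ and $x^{e_{k+1}}$ within the block decomposition of Notation~\ref{blocks}. There are exactly five configurations: both exponents lie in the same $m$-block, both lie in the same $r$-block, they lie in two successive $m$-blocks of a single $p$-block, they lie in the last $m$-block and the $r$-block of a single $p$-block, or they straddle a $p$-block boundary. In the $m$-block cases Lemma~\ref{relation}-1 is crucial: it says $f_{m,p,i,j} = f_{m,p,i,0}$ for $0 \le j \le q-1$, so the gap reduces to one in the representative $f_{m,p,i,0}$ and matches $g^{\mathsf{w}}_{m,i}$ or $g^{\mathsf{b}}_{m,i}$ verbatim. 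The remaining three cases match $g^{\mathsf{w}}_{r,i}$, $g^{\mathsf{b}}_{r,i}$, and $g^{\mathsf{b}}_{p,i}$ by direct inspection of where the nonzero terms sit.

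A prerequisite for the $p$-block-boundary case is that every $p$-block has a well-defined ``last nonzero exponent'' and a well-defined ``first nonzero exponent''. This is guaranteed by Corollary~\ref{fmpi0<>0}, which ensures $f_{m,p,i,0} \neq 0$, and hence every $p$-block $f_{m,p,i}$ is nonzero. The main obstacle --- more bookkeeping than mathematical depth --- will be the degenerate configurations: $q=1$ (no two successive $m$-blocks inside a $p$-block, so $g^{\mathsf{b}}_{m,i}$ is vacuous), a vanishing $r$-block (so $g^{\mathsf{w}}_{r,i}$ and $g^{\mathsf{b}}_{r,i}$ are vacuous), and $i=0$ (no left neighbor $p$-block, so $g^{\mathsf{b}}_{p,0}$ is vacuous). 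These are all absorbed by the assignment-to-zero conventions at the end of Notation~\ref{block gaps}, and checking that each vacuous case corresponds to a situation where no extra gap appears in $\Phi_{mp}$ completes the argument.
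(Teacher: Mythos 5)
Your proposal is correct and is essentially the paper's argument spelled out: the paper's own proof just declares the lemma ``immediate'' from Lemma~\ref{relation}-1, Theorem~\ref{explicit} and Corollary~\ref{fmpi0<>0}, and your case analysis (representative reduction for the $m$-blocks, nonvanishing of every $m$-block so that no gap can straddle more than one block boundary except via the $p$-block or vanishing-$r$-block conventions, plus the tautological $\geq$ direction) is exactly the bookkeeping those citations are standing in for.
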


\begin{proof}
Immediate from Lemma \ref{relation}-1, Theorem~\ref{explicit} and
Corollary~\ref{fmpi0<>0}.
\end{proof}

\noindent The above lemma tells us that we need to investigate the block gaps:
$g_{m,i}^{\mathsf{w}},\ g_{r,i}^{\mathsf{w}},\ g_{m,i}^{\mathsf{b}}%
,\ g_{r,i}^{\mathsf{b}},\ g_{p,i}^{\mathsf{b}}$. In particular, we need to
show that each block gap is less than or equal to $\varphi\left(  m\right)  $
and at least one of them is equal to $\varphi\left(  m\right)  $. In the
following, we will do so one by one.

\begin{lemma}
\label{gwm_i}$g_{m,i}^{\mathsf{w}}\leq\varphi\left(  m\right)  $
\end{lemma}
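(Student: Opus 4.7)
The plan is to exploit the explicit factorization $f_{m,p,i,0} = \Theta_i \Psi_m$ from Theorem~\ref{explicit}, together with the fact that $\deg \Theta_i < \varphi(m)$ (since $\Theta_i$ is a remainder modulo $\Phi_m$). First, I would multiply both sides by $\Phi_m$ and use $\Phi_m \Psi_m = x^m - 1$ to obtain
\[
\Phi_m \cdot f_{m,p,i,0} \;=\; \Theta_i\,(x^m - 1) \;=\; x^m\,\Theta_i - \Theta_i.
\]
Since $\deg \Theta_i < \varphi(m) \leq m$, the right-hand side has its support confined to $[0,\, \varphi(m)-1] \cup [m,\, m+\varphi(m)-1]$. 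Writing $\Phi_m = \sum_{j=0}^{\varphi(m)} \phi_j x^j$ (with $\phi_0 = \phi_{\varphi(m)} = 1$) and $f_{m,p,i,0} = \sum_k f_k x^k$, the vanishing of the coefficient of $x^k$ in $\Phi_m \cdot f_{m,p,i,0}$ for $\varphi(m) \leq k \leq m-1$ unfolds into the linear recurrence
\[
f_k \;=\; -\sum_{j=1}^{\varphi(m)} \phi_j\, f_{k-j}, \qquad \varphi(m) \leq k \leq m-1.
\]
The decisive consequence is that if $f_{k-1} = f_{k-2} = \cdots = f_{k-\varphi(m)} = 0$ for some $k$ in this range, then $f_k = 0$ as well.

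With this recurrence in hand, I would argue by contradiction. Suppose $g_{m,i}^{\mathsf{w}} > \varphi(m)$; then there are consecutive exponents $e_1 < e_2$ in the support of $f_{m,p,i,0}$ with $e_2 - e_1 \geq \varphi(m)+1$, so $f_{e_1+1} = f_{e_1+2} = \cdots = f_{e_1+\varphi(m)} = 0$. I split on the size of $e_1$. If $e_1 \leq \psi(m)-2$, then $k_0 := e_1 + \varphi(m) + 1$ lies in $[\varphi(m), m-1]$, so the recurrence applies at $k_0$ and forces $f_{k_0} = 0$; iterating upward through $k_0, k_0+1, \ldots, m-1$ shows that all of these coefficients vanish. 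Since $\deg f_{m,p,i,0} \leq (\varphi(m)-1) + \psi(m) = m-1$, we have $e_2 \leq m-1$, so $f_{e_2} = 0$, contradicting the choice of $e_2$. If instead $e_1 \geq \psi(m)-1$, then $e_2 \geq e_1 + \varphi(m) + 1 \geq m$, which again contradicts $\deg f_{m,p,i,0} \leq m-1$.

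The main obstacle is recognizing that $\Phi_m \Psi_m = x^m - 1$ collapses the product $\Phi_m \cdot f_{m,p,i,0}$ into a polynomial supported only on two short intervals of length $\varphi(m)$, which in turn encodes every coefficient of $f_{m,p,i,0}$ in degrees $\geq \varphi(m)$ as a $\varphi(m)$-term linear combination of its earlier coefficients. Once this is in place, the propagation of zeros and the resulting contradiction are straightforward, with only the boundary subcase $e_1 \geq \psi(m) - 1$ needing a separate check.
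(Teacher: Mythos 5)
Your proof is correct and is essentially the paper's own argument in different clothing: the paper also multiplies by $\Phi_m$, uses $\Phi_m\Psi_m=x^m-1$ to conclude that $\Phi_m\cdot f_{m,p,i,0}=\Theta_i(x^m-1)$ has no terms in degrees $\varphi(m),\dots,m-1$, and derives a contradiction from a gap exceeding $\varphi(m)$. Your linear recurrence evaluated at the top of the gap is exactly the paper's observation that the coefficient of $x^{e_k}$ in $h\Phi_m$ reduces to $\phi_0 h_{e_k}\neq 0$, so the zero-propagation and case split are just a (slightly longer) repackaging of the same coefficient identity.
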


\begin{proof}
We will first prove a more general claim. Then the lemma will follow immediately.

\begin{enumerate}
\item Let $h=v\Psi_{m}$ such that $\deg h<m$. Then $g\left(  h\right)
\leq\varphi(m)$.

We prove the claim by contradiction.

\begin{enumerate}
\item Assume $g(h)>\varphi(m)$. Let
\[
h=\sum_{i=1}^{t}h_{i}\;x^{e_{i}}%
\]
where $0\leq e_{1}<e_{2}<\cdots<e_{t}<m\ \ $and $h_{1},\ldots,h_{t}\neq0$.
Then for some $k$ we have
\[
g(h)=e_{k}-e_{k-1}>\varphi(m).
\]

\item Note
\[
h\Phi_{m}=\sum_{i=1}^{t}h_{i}\;x^{e_{i}}\;\Phi_{m}(x)=\sum_{i=1}^{t}%
B_{i}\ \ \ \ \ \text{where }B_{i}=h_{i}x^{e_{i}}\Phi_{m}(x)
\]
The polynomials $B_{1},\ldots,B_{t}$ are visualized in the following diagram
\[
\scalebox{0.8}{
\begin{pspicture}(0,-4.2)(11,0.5)
\definecolor{MyColor}{rgb}{0.95,0.95,0.95}
\def\poly{
\psframe[linewidth=0.04,fillstyle=solid,fillcolor=MyColor](3.0,0.3)(0.0,-0.3)
\psframe[linewidth=0.04,fillstyle=solid,fillcolor=black](0.2,0.3)(0.0,-0.3)
\psframe[linewidth=0.04,fillstyle=solid,fillcolor=black](3.0,0.3)(2.8,-0.3)
}
\multips(0.00, 0.00)(2,-0.75){1}{\poly}
\multips(2.00,-1.25)(4,-0.75){2}{\poly}
\multips(8.00,-3.30)(2,-0.75){1}{\poly}
\multirput(1.85,-0.46)(0.30,-0.18){3}{$\boldsymbol{\cdot}$}
\multirput(8.00,-2.49)(0.30,-0.18){3}{$\boldsymbol{\cdot}$}
\psline[linewidth=0.02cm](4.9,-1.2)(4.9,-2.5)
\pcline[linewidth=0.02cm,linestyle=dashed]{<->}(2.1,-2.0)(4.9,-2.0)\ncput*{$\varphi(m)$}
\psline[linewidth=0.02cm](2.1,-1.25)(2.1,-4.0)
\psline[linewidth=0.02cm](6.1,-1.9)(6.1,-4.0)
\pcline[linewidth=0.02cm,linestyle=dashed]{<->}(2.1,-3.5)(6.1,-3.5)\ncput*{$g(h)$}
\rput( 1.50, 0.00){$B_1$}
\rput( 3.60,-1.25){$B_{k-1}$}
\rput( 7.50,-2.00){$B_{k}$}
\rput( 9.65,-3.30){$B_{t}$}
\rput( 2.10,-4.20){$e_{k-1}$}
\rput( 6.10,-4.20){$e_k$}
\end{pspicture}}
\]
where the black color indicates that the corresponding exponent (term) occurs
in the polynomial and the grey color indicates that we do not need to know
whether it occurs or not.

\medskip

Observe that the term $x^{e_{k}}$ appears only in $B_{k}$ and thus it does not
get cancelled during the summation. Hence%
\[
\operatorname*{coeff}\nolimits_{e_{k}}\left(  h\Phi_{m}\right)  \neq0
\]

\item Note%
\[
h\Phi_{m}=v\Psi_{m}\Phi_{m}=v\left(  x^{m}-1\right)  =vx^{m}-v
\]
Observe
\[
\deg v\;<\;\varphi\left(  m\right)  \;<\;e_{k}<\;m\;\leq\;\operatorname*{tdeg}%
\left(  vx^{m}\right)
\]
Hence%
\[
\operatorname*{coeff}\nolimits_{e_{k}}\left(  h\Phi_{m}\right)  =0
\]

\item Contradiction.
\end{enumerate}

\item $g_{m,i}^{\mathsf{w}}\leq\varphi(m)$.

Immediate from applying the above claim to the expression in Theorem
\ref{explicit}.
\end{enumerate}
\end{proof}

\begin{lemma}
\label{gwr_i}$g_{r,i}^{\mathsf{w}}\leq\varphi\left(  m\right)  . $
\end{lemma}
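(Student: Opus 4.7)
The plan is to reduce the claim to the already-established bound on $g_{m,i}^{\mathsf{w}}$ from Lemma~\ref{gwm_i}, using the relationship between the $r$-block and the first $m$-block provided by Lemma~\ref{relation}-2, namely $f_{m,p,i,q} = \mathcal{T}_{r}\,f_{m,p,i,0}$.

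First I would unpack what $\mathcal{T}_{r}$ does on the level of exponents: by Notation~\ref{operation}, it simply discards every monomial of $f_{m,p,i,0}$ whose exponent is $\geq r$. Consequently, the set of nonzero exponents appearing in $f_{m,p,i,q}$ is exactly the set of nonzero exponents of $f_{m,p,i,0}$ that lie in $\{0,1,\ldots,r-1\}$. In particular, if $e < e'$ are two consecutive nonzero exponents of $f_{m,p,i,q}$, then no nonzero exponent of $f_{m,p,i,0}$ lies strictly between them (such an exponent would be $<r$ and hence would survive truncation, contradicting consecutiveness in $f_{m,p,i,q}$). So $e$ and $e'$ are also consecutive nonzero exponents of $f_{m,p,i,0}$, and the gap $e'-e$ appears as a gap of $f_{m,p,i,0}$ too.

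Taking the maximum, this yields $g_{r,i}^{\mathsf{w}} \leq g_{m,i}^{\mathsf{w}}$, and Lemma~\ref{gwm_i} then gives $g_{r,i}^{\mathsf{w}} \leq \varphi(m)$. The only edge case is when $f_{m,p,i,q} = 0$ (equivalently, the truncation kills every term), but Notation~\ref{block gaps} assigns $g_{r,i}^{\mathsf{w}} = 0$ in that case, so the bound holds trivially.

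I do not anticipate a real obstacle here: the argument is essentially the monotonicity of the maximum-gap statistic under truncation, and all of the structural work has already been absorbed into Lemma~\ref{relation} and Lemma~\ref{gwm_i}. The only care needed is to explicitly observe that truncation cannot create a new, larger gap, since a gap of the truncated polynomial can only shrink or coincide with a pre-existing gap of $f_{m,p,i,0}$.
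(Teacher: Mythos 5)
Your proof is correct and follows essentially the same route as the paper: both reduce to $f_{m,p,i,q}=\mathcal{T}_{r}f_{m,p,i,0}$ via Lemma~\ref{relation}-2, observe that truncation cannot increase the maximum gap, and then invoke Lemma~\ref{gwm_i}, with the $f_{m,p,i,q}=0$ case handled by the convention in Notation~\ref{block gaps}. The only difference is that you spell out the (correct) justification of the truncation-monotonicity step, which the paper simply asserts.
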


\begin{proof}
When $f_{m,p,i,q}=0$, the claim is vacuously true. When $f_{m,p,i,q}\neq0$,
the claim is immediate from
\begin{align*}
g_{r,i}^{\mathsf{w}}\ =  &  \ g\left(  f_{m,p,i,q}\right)  &  & \\
=  &  \ g\left(  \mathcal{T}_{r}f_{m,p,i,0}\right)  &  &  \text{from Lemma
\ref{relation}-2}\\
\leq &  \ g\left(  f_{m,p,i,0}\right)  &  &  \text{since truncation does not
increase gap}\\
\leq &  \ \varphi\left(  m\right)  &  &  \text{from Lemma \ref{gwm_i}}%
\end{align*}

\end{proof}

\begin{lemma}
\label{gbm_i}$g_{m,i}^{\mathsf{b}}\leq\varphi\left(  m\right)  $
\end{lemma}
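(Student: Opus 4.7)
The plan is to reduce the claim to the degree-trailing-degree bound already established in Corollary~\ref{degdiff}. Conceptually, $g_{m,i}^{\mathsf{b}}$ measures the gap that sits across the boundary between two consecutive $m$-blocks within the $i$-th $p$-block, and by Lemma~\ref{relation}-1 these two $m$-blocks are copies of the same polynomial $f_{m,p,i,0}$. So the gap across the boundary is determined by how ``short'' $f_{m,p,i,0}$ is compared to its ambient window of length $m$: the more $f_{m,p,i,0}$ fills out its window (in the sense of $\deg - \operatorname{tdeg}$), the smaller the between-block gap.

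First, I would dispose of the trivial case. If $q = 1$, then by definition $g_{m,i}^{\mathsf{b}} = 0$ and the claim is immediate, so assume $q \ge 2$.

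Next, I would compute $g_{m,i}^{\mathsf{b}}$ explicitly. The block $f_{m,p,i,0}$ occupies the window of exponents $[ip,\ ip + m - 1]$ in $\Phi_{mp}$, so its largest nonzero exponent is $ip + \deg f_{m,p,i,0}$. The block $f_{m,p,i,1}$ occupies the next window $[ip + m,\ ip + 2m - 1]$, so its smallest nonzero exponent is $ip + m + \operatorname{tdeg} f_{m,p,i,1}$. Subtracting, and using $f_{m,p,i,1} = f_{m,p,i,0}$ from Lemma~\ref{relation}-1, we get
\[
g_{m,i}^{\mathsf{b}} \;=\; m + \operatorname{tdeg} f_{m,p,i,0} - \deg f_{m,p,i,0} \;=\; m - \bigl(\deg f_{m,p,i,0} - \operatorname{tdeg} f_{m,p,i,0}\bigr).
\]
Here I am using Corollary~\ref{fmpi0<>0} to guarantee $f_{m,p,i,0} \ne 0$ so that $\deg$ and $\operatorname{tdeg}$ are well-defined.

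Finally, I would apply Corollary~\ref{degdiff}, which gives $\deg f_{m,p,i,0} - \operatorname{tdeg} f_{m,p,i,0} \ge \psi(m) = m - \varphi(m)$. Substituting,
\[
g_{m,i}^{\mathsf{b}} \;\le\; m - \psi(m) \;=\; m - \bigl(m - \varphi(m)\bigr) \;=\; \varphi(m),
\]
as claimed. There is no real obstacle here: the substantive content has already been packaged into Lemma~\ref{relation}-1 (repetition of $m$-blocks) and Corollary~\ref{degdiff} (the span bound coming from the $\Psi_m$-divisibility in Theorem~\ref{explicit}). The only thing to be careful about is correctly translating the combinatorial gap across a block boundary into the algebraic quantity $\deg - \operatorname{tdeg}$, which is the short computation above.
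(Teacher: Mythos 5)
Your proposal is correct and follows essentially the same route as the paper: reduce $g_{m,i}^{\mathsf{b}}$ to $m+\operatorname{tdeg}f_{m,p,i,0}-\deg f_{m,p,i,0}$ via Lemma~\ref{relation}-1 and then invoke Corollary~\ref{degdiff} together with $\psi(m)=m-\varphi(m)$. The only difference is that you spell out the identity $\psi(m)=m-\varphi(m)$ and the non-vanishing of the block explicitly, which the paper leaves implicit.
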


\begin{proof}
When $q=1$, the claim is vacuously true. When $q>1$, the claim is immediate
from
\begin{align*}
g_{m,i}^{\mathsf{b}}\ =  &  \ m+\operatorname*{tdeg}f_{m,p,i,1}-\deg
f_{m,p,i,0} &  & \\
=  &  \ m+\operatorname*{tdeg}f_{m,p,i,0}-\deg f_{m,p,i,0} &  &  \text{from
Lemma\ref{relation}-1}\\
\leq &  \ \varphi\left(  m\right)  &  &  \text{from Corollary \ref{degdiff}}%
\end{align*}

\end{proof}

\begin{lemma}
\label{gbr_i}$g_{r,i}^{\mathsf{b}}\leq\varphi\left(  m\right)  $ and the
equality holds when $i=0$.
\end{lemma}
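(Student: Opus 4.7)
The plan is to reduce $g_{r,i}^{\mathsf{b}}$ to a difference between the top and bottom degrees of the representative block $f_{m,p,i,0}$ and then apply Corollary~\ref{degdiff}. First, if $f_{m,p,i,q}=0$ the claim is vacuous by the convention in Notation~\ref{block gaps}, so I assume $f_{m,p,i,q}\neq 0$. Unpacking the definition of the gap (cf.\ the diagram in Example~\ref{example_block}), the smallest exponent coming from $f_{m,p,i,q}$ sits at offset $ip+qm+\operatorname{tdeg}f_{m,p,i,q}$, while the largest exponent coming from $f_{m,p,i,q-1}$ sits at $ip+(q-1)m+\deg f_{m,p,i,q-1}$, giving
\[
g_{r,i}^{\mathsf{b}} \;=\; m + \operatorname{tdeg} f_{m,p,i,q} - \deg f_{m,p,i,q-1}.
\]

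Next I would rewrite both terms in terms of $f_{m,p,i,0}$. Lemma~\ref{relation}-1 says $f_{m,p,i,q-1}=f_{m,p,i,0}$, so $\deg f_{m,p,i,q-1}=\deg f_{m,p,i,0}$. Lemma~\ref{relation}-2 says $f_{m,p,i,q}=\mathcal{T}_r f_{m,p,i,0}$; since truncation only deletes high-order terms and we are in the case where it is nonzero, the trailing degree is preserved, i.e.\ $\operatorname{tdeg} f_{m,p,i,q}=\operatorname{tdeg} f_{m,p,i,0}$. Substituting yields
\[
g_{r,i}^{\mathsf{b}} \;=\; m-\bigl(\deg f_{m,p,i,0}-\operatorname{tdeg} f_{m,p,i,0}\bigr) \;\leq\; m-\psi(m)\;=\;\varphi(m),
\]
where the inequality is exactly Corollary~\ref{degdiff}.

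To get equality when $i=0$, I use the second part of Corollary~\ref{degdiff}, which gives $\deg f_{m,p,0,0}-\operatorname{tdeg} f_{m,p,0,0}=\psi(m)$. The only remaining check is that $f_{m,p,0,q}\neq 0$, so that the computation above is not placed in the vacuous case. This is immediate from Theorem~\ref{explicit}: $f_{m,p,0,0}=-a_{0}\Psi_{m}$ and $\Psi_m(0)\neq 0$, so $\operatorname{tdeg} f_{m,p,0,0}=0$; since $r\geq 1$ (because $p>m$ and $p$ is prime, so $\gcd(p,m)=1$), the truncation $\mathcal{T}_r f_{m,p,0,0}$ retains the nonzero constant term. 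Thus $g_{r,0}^{\mathsf{b}}=m-\psi(m)=\varphi(m)$.

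There is no real obstacle here once Lemma~\ref{relation} and Corollary~\ref{degdiff} are in hand; the main care is bookkeeping the offsets in the definition of $g_{r,i}^{\mathsf{b}}$ and noting the preservation of the trailing degree under $\mathcal{T}_r$, together with verifying the nonzero condition that makes the equality case meaningful.
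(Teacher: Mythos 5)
Your proof is correct and follows essentially the same route as the paper: reduce $g_{r,i}^{\mathsf{b}}$ to $m+\operatorname{tdeg}f_{m,p,i,0}-\deg f_{m,p,i,0}$ via Lemma~\ref{relation}-1,2 and the nonvanishing of $f_{m,p,i,q}$, then invoke Corollary~\ref{degdiff}. Your explicit verification that $f_{m,p,0,q}\neq 0$ (so the $i=0$ equality case is not vacuous) is a detail the paper leaves implicit, and it is a welcome addition.
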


\begin{proof}
When $f_{m,p,i,q}=0$, the claim is vacuously true. When $f_{m,p,i,q}\neq0$,
note
\begin{align*}
g_{r,i}^{\mathsf{b}}\ =  &  \ m+\operatorname*{tdeg}f_{m,p,i,q}-\deg
f_{m,p,i,q-1} &  & \\
=  &  \ m+\operatorname*{tdeg}\mathcal{T}_{r}f_{m,p,i,0}-\deg f_{m,p,i,0} &
&  \text{from Lemma \ref{relation}-1,2}\\
=  &  \ m+\operatorname*{tdeg}f_{m,p,i,0}-\deg f_{m,p,i,0} &  &  \text{since
}f_{m,p,i,q}\neq0\\
\leq &  \ \varphi\left(  m\right)  &  &  \text{from Corollary \ref{degdiff}}%
\end{align*}

\end{proof}

\begin{lemma}
\label{gbp_i}$g_{p,i}^{\mathsf{b}}\leq\varphi\left(  m\right)  $
\end{lemma}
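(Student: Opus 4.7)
I will write $g_{p,i}^{\mathsf{b}} = p + t_i - d_{i-1}$ with $t_i := \operatorname{tdeg} f_{m,p,i,0}$ and $d_{i-1} := \deg f_{m,p,i-1}$; only $i \geq 1$ matters since $g_{p,0}^{\mathsf{b}} = 0$ by convention. The two workhorses are Lemma~\ref{relation}-3, $f_{m,p,i,0} = \mathcal{R}_{m,r} f_{m,p,i-1,0} - a_i \Psi_m$, and Corollary~\ref{degdiff}, $\deg f_{m,p,i,0} - t_i \geq \psi(m)$. I then split into two cases according to whether the trailing block $f_{m,p,i-1,q}$ vanishes.

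\textbf{Cases.} In Case A ($f_{m,p,i-1,q} \neq 0$), setting $d' := \deg \mathcal{T}_r f_{m,p,i-1,0}$ gives $g_{p,i}^{\mathsf{b}} = r - d' + t_i$ and, by tracking where the leading term of the low part ($<r$) rotates, $\deg \mathcal{R}_{m,r} f_{m,p,i-1,0} = m - r + d'$. In the generic regime $m - r + d' \geq \psi(m)$, subtraction by $a_i \Psi_m$ (of degree $\psi(m)$) preserves the leading degree, so $\deg f_{m,p,i,0} \leq m - r + d'$, and Corollary~\ref{degdiff} yields $t_i \leq \varphi(m) - r + d'$, hence $g_{p,i}^{\mathsf{b}} \leq \varphi(m)$. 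In Case B ($f_{m,p,i-1,q} = 0$), the hypothesis gives $\operatorname{tdeg} f_{m,p,i-1,0} \geq r$, and Corollary~\ref{degdiff} applied to $f_{m,p,i-1,0}$ forces $\deg f_{m,p,i-1,0} \geq r + \psi(m)$. Since all exponents of $f_{m,p,i-1,0}$ are $\geq r$, $\mathcal{R}_{m,r}$ acts as the pure shift $x^{-r}$, so $\deg f_{m,p,i,0} \leq \deg f_{m,p,i-1,0} - r$, and Corollary~\ref{degdiff} applied to $f_{m,p,i,0}$ yields $g_{p,i}^{\mathsf{b}} = m + r + t_i - \deg f_{m,p,i-1,0} \leq m - \psi(m) = \varphi(m)$.

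\textbf{Main obstacle.} The remaining pathological sub-case of Case A is $d' < r - \varphi(m)$, i.e., $m - r + d' < \psi(m)$: then only $\deg f_{m,p,i,0} \leq \psi(m)$ is available, Corollary~\ref{degdiff} forces $t_i = 0$ and $\deg f_{m,p,i,0} = \psi(m)$, and the crude bound becomes $g_{p,i}^{\mathsf{b}} = r - d' > \varphi(m)$. I will rule this sub-case out via Theorem~\ref{explicit}: the identity $f_{m,p,i,0} = \Theta_i \Psi_m$ combined with $\deg f_{m,p,i,0} = \psi(m)$ forces $\Theta_i$ to be a nonzero constant $c$. Since $\gcd(m-r, m) = \gcd(p, m) = 1$, the substitution $\zeta \mapsto \zeta^{m-r}$ permutes the primitive $m$-th roots of unity, so $\Theta_i = \operatorname{rem}(w_i(x^{m-r}), \Phi_m) = c$ gives $w_i(\eta) = c$ for every primitive $m$-th root $\eta$; equivalently $\Phi_m \mid w_i - c$. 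Because $\deg w_i \leq i \leq \varphi(m) - 1 < \deg \Phi_m$, this forces the polynomial identity $w_i(x) = c$; reading off $w_i(x) = -\sum_{s=0}^i a_s x^{i-s}$, vanishing of all non-constant coefficients gives in particular $a_0 = 0$, contradicting $a_0 = \Phi_m(0) = 1$ for $m > 1$. Hence this sub-case is impossible, completing the proof that $g_{p,i}^{\mathsf{b}} \leq \varphi(m)$.
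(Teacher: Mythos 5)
Your proof is correct, and its skeleton --- writing $g_{p,i}^{\mathsf{b}} = p + \operatorname{tdeg} f_{m,p,i,0} - \deg f_{m,p,i-1}$, splitting on whether $f_{m,p,i-1,q}$ vanishes, relating $\deg f_{m,p,i-1}$ to $\deg \mathcal{R}_{m,r}f_{m,p,i-1,0}$, and closing with Corollary~\ref{degdiff} --- is the same as the paper's. Where you genuinely diverge is in how the one real obstruction is removed. The paper notes that $\mathcal{R}_{m,r}f_{m,p,i-1,0} = f_{m,p,i,0} + a_i\Psi_m$ is a nonzero multiple of $\Psi_m$ (by Theorem~\ref{explicit}, Lemma~\ref{relation}-3 and Corollary~\ref{fmpi0<>0}), hence has degree at least $\psi(m)$; combined with $\deg f_{m,p,i,0}\ge\psi(m)$ this gives $\deg f_{m,p,i,0}=\deg\mathcal{R}_{m,r}f_{m,p,i-1,0}$ in one stroke, so your ``pathological sub-case'' $m-r+d'<\psi(m)$ simply cannot arise. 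You instead meet that sub-case head-on and rule it out by showing $\Theta_i$ cannot be a nonzero constant for $i\ge 1$, using that $\zeta\mapsto\zeta^{m-r}$ permutes the primitive $m$-th roots of unity and that $\deg w_i\le\varphi(m)-1<\deg\Phi_m$ --- essentially the same arithmetic the paper uses to prove Corollary~\ref{fmpi0<>0}, applied to $w_i-c$ instead of $w_i$. Both routes are sound; the paper's divisibility observation is shorter and stays entirely at the level of degrees, while your detour through $\Theta_i$ yields the slightly stronger byproduct that $\deg\Theta_i\ge 1$, i.e.\ $\deg f_{m,p,i,0}>\psi(m)$, for every $i\ge 1$.
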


\begin{proof}
When $i=0$, the claim is vacuously true. Hence from now on we assume that
$i\geq1$. The proof is a bit long. Hence we will divide the proof into several
claims and their proofs.

\begin{enumerate}
\item $\deg f_{m,p,i,0}=\deg\mathcal{R}_{m,r}f_{m,p,i-1,0}$

\begin{enumerate}
\item $\Psi_{m}\ |\ f_{m,p,i,0}$ from Theorem \ref{explicit}.

\item $\Psi_{m}\ |\ \mathcal{R}_{m,r}f_{m,p,i-1,0}\ $from Lemma \ref{relation}%
-3 and the step above.

\item $\mathcal{R}_{m,r}f_{m,p,i-1,0}\neq0\ $from Corollary \ref{fmpi0<>0} and
Notation \ref{operation}.

\item $\psi\left(  m\right)  \leq\deg\mathcal{R}_{m,r}f_{m,p,i-1,0}\ \ $from
the two steps above.

\item $\psi\left(  m\right)  \leq\deg f_{m,p,i,0}\ \ $from Corollary
\ref{degdiff}.

\item $\deg f_{m,p,i,0}=\deg\mathcal{R}_{m,r}f_{m,p,i-1,0}\ $from Lemma
\ref{relation}-3.
\end{enumerate}

\item $\deg f_{m,p,i-1}=\deg f_{m,p,i,0}+p-m$

Note%
\begin{align*}
&  \deg f_{m,p,i-1} &  & \\
=  &  \
\begin{cases}
\deg f_{m,p,i-1,q}+qm & \text{if \thinspace}f_{m,p,i-1,q}\neq0\\
\deg f_{m,p,i-1,0}+\left(  q-1\right)  m & \text{if \thinspace}f_{m,p,i-1,q}=0
\end{cases}
&  &  \text{from Notation \ref{blocks}, Lemma \ref{relation}-1 and Corollary
\ref{fmpi0<>0}}\\
=  &  \
\begin{cases}
\deg\mathcal{R}_{m,r}f_{m,p,i-1,0}-\left(  m-r\right)  +qm & \text{if
\thinspace}f_{m,p,i-1,q}\neq0\\
\deg\mathcal{R}_{m,r}f_{m,p,i-1,0}+r+\left(  q-1\right)  m & \text{if
\thinspace}f_{m,p,i-1,q}=0
\end{cases}
&  &  \text{from Notation \ref{operation} and Lemma \ref{relation}-2}\\
=  &  \ \deg\mathcal{R}_{m,r}f_{m,p,i-1,0}+p-m &  &  \text{from }p=qm+r\\
=  &  \ \deg f_{m,p,i,0}+p-m &  &  \text{from the last claim}%
\end{align*}

\item $g_{p,i}^{\mathsf{b}}\leq\varphi\left(  m\right)  $

Note%
\begin{align*}
g_{p,i}^{\mathsf{b}}  &  =p+\operatorname*{tdeg}f_{m,p,i}-\deg f_{m,p,i-1} &
& \\
&  =p+\operatorname*{tdeg}f_{m,p,i,0}-\left(  \deg f_{m,p,i,0}+p-m\right)  &
&  \text{from the last claim}\\
&  =m+\operatorname*{tdeg}f_{m,p,i,0}-\deg f_{m,p,i,0} &  &  \text{by
simplifying}\\
&  \leq\varphi\left(  m\right)  &  &  \text{from Corollary \ref{degdiff}}%
\end{align*}

\end{enumerate}
\end{proof}

\noindent Finally we \textquotedblleft combine\textquotedblright\ the previous
results to prove the main result.

\begin{proof}
[Proof of Main Result (Theorem \ref{main})]Recall Lemma \ref{g_in_blockgaps}
\[
g\left(  \Phi_{mp}\right)  =\max\limits_{0\leq i\leq\varphi\left(  m\right)
-1}\left\{  g_{m,i}^{\mathsf{w}},\ \ g_{r,i}^{\mathsf{w}},\ \ g_{m,i}%
^{\mathsf{b}},\ \ g_{r,i}^{\mathsf{b}},\ \ g_{p,i}^{\mathsf{b}}\right\}
\]
By applying Lemmas \ref{gwm_i}, \ref{gwr_i}, \ref{gbm_i}, \ref{gbr_i} and
\ref{gbp_i} to the above expression, we finally have%
\[
g\left(  \Phi_{mp}\right)  =\varphi\left(  m\right)
\]

\end{proof}

\bibliographystyle{plain}
\bibliography{../../References/allrefs}
{}

\end{document}